\documentclass[a4paper,11pt,fleqn]{article}
\usepackage{pstricks}
\usepackage{amsmath}
\usepackage{amssymb}
\usepackage{pifont}
\usepackage{theorem} 
\usepackage{euscript}
\usepackage{exscale,relsize}
\usepackage{epic,eepic}
\usepackage{multicol}
\usepackage{makeidx}
\usepackage{srcltx}
\usepackage{xcolor}
\usepackage{url}
\usepackage{charter}
\usepackage{booktabs,subfig,graphicx}
\newcommand{\email}[1]{\href{mailto:#1}{\nolinkurl{#1}}}
\usepackage[normalem]{ulem}     
\topmargin     0.4cm
\oddsidemargin -0.1cm
\textwidth     16.8cm 
\headheight    0.0cm
\textheight    22.6cm
\parindent     6mm
\parskip       10pt
\tolerance     1000
\newlength{\mySubFigSize}
\setlength{\mySubFigSize}{5cm}
\tolerance 2500


\newcommand{\minimize}[2]{\ensuremath{\underset{\substack{{#1}}}%
{\text{minimize}}\;\;#2 }}

\newcommand{\scal}[2]{{\left\langle{{#1}\mid{#2}}\right\rangle}}

\newcommand{\menge}[2]{\big\{{#1}~\big |~{#2}\big\}} 
 
\newcommand{\HHH}{{\ensuremath{\boldsymbol{\mathcal H}}}}
\newcommand{\KKK}{\ensuremath{\boldsymbol{\mathcal K}}}
\newcommand{\GGG}{\ensuremath{{\boldsymbol{\mathcal G}}}}

\newcommand{\HH}{\ensuremath{{\mathcal H}}}
\newcommand{\VEC}{\text{vec}}
\newcommand{\GG}{\ensuremath{{\mathcal G}}}
\newcommand{\KK}{\ensuremath{{\mathcal K}}}
\newcommand{\Sum}{\ensuremath{\displaystyle\sum}}
\newcommand{\emp}{\ensuremath{{\varnothing}}}

\newcommand{\Id}{\ensuremath{\operatorname{Id}}\,}

\newcommand{\RR}{\ensuremath{\mathbb{R}}}
\newcommand{\RP}{\ensuremath{\left[0,+\infty\right[}}

\newcommand{\BL}{\ensuremath{\EuScript B}\,}
\newcommand{\RPP}{\ensuremath{\left]0,+\infty\right[}}

\newcommand{\RPX}{\ensuremath{\left[0,+\infty\right]}}
\newcommand{\RX}{\ensuremath{\left]-\infty,+\infty\right]}}
\newcommand{\RXX}{\ensuremath{\left[-\infty,+\infty\right]}}

\newcommand{\NN}{\ensuremath{\mathbb N}}

\newcommand{\weakly}{\ensuremath{\:\rightharpoonup\:}}
\newcommand{\exi}{\ensuremath{\exists\,}}
\newcommand{\ran}{\ensuremath{\text{\rm ran}\,}}

\newcommand{\pinf}{\ensuremath{{+\infty}}}

\newcommand{\dom}{\ensuremath{\text{\rm dom}\,}}
\newcommand{\prox}{\ensuremath{\text{\rm prox}}}

\newcommand{\gra}{\ensuremath{\text{gra}\,}}

\newcommand{\sri}{\ensuremath{\text{\rm sri}\,}}
\newcommand{\reli}{\ensuremath{\text{\rm ri}\,}}
\newcommand{\infconv}{\ensuremath{\mbox{\small$\,\square\,$}}}
\newcommand{\pushfwd}{\ensuremath{\mbox{\Large$\,\triangleright\,$}}}
\newcommand{\zeroun}{\ensuremath{\left]0,1\right[}}

\renewcommand{\leq}{\ensuremath{\leqslant}}
\renewcommand{\geq}{\ensuremath{\geqslant}}

\newcommand{\obs}{y}
\newcommand{\tv}{\text{tv}}     

\newcommand{\TV}{D^{(1)}}       
\newcommand{\higherTV}{D^{(2)}}
\newcommand{\Dh}{D_\leftrightarrow} 
\newcommand{\Dv}{D_\updownarrow}
\newcommand{\Dhh}{\widetilde{D}_\leftrightarrow}
\newcommand{\Dvv}{\widetilde{D}_\updownarrow}

\newtheorem{theorem}{Theorem}[section]
\newtheorem{lemma}[theorem]{Lemma}
\newtheorem{corollary}[theorem]{Corollary}
\newtheorem{proposition}[theorem]{Proposition}

\theoremstyle{plain}{\theorembodyfont{\rmfamily}%
}
\theoremstyle{plain}{\theorembodyfont{\rmfamily}%
}
\theoremstyle{plain}{\theorembodyfont{\rmfamily}%
\newtheorem{remark}[theorem]{Remark}}
\theoremstyle{plain}{\theorembodyfont{\rmfamily}%
}
\theoremstyle{plain}{\theorembodyfont{\rmfamily}%
}
\theoremstyle{plain}{\theorembodyfont{\rmfamily}%
\newtheorem{definition}[theorem]{Definition}}
\theoremstyle{plain}{\theorembodyfont{\rmfamily}%
\newtheorem{problem}[theorem]{Problem}}

\numberwithin{equation}{section}
\begin{document}

\title{\sffamily\huge An Algorithm for Splitting Parallel Sums 
of Linearly Composed Monotone Operators, with Applications to
Signal Recovery}

\author{Stephen Becker and Patrick L. Combettes\\[5mm]
\small UPMC Universit\'e Paris 06\\
\small Laboratoire Jacques-Louis Lions -- UMR CNRS 7598\\
\small 75005 Paris, France\\
\small \ttfamily{becker@ljll.math.upmc.fr}, 
\ttfamily{plc@math.jussieu.fr}\\[4mm]
}

\date{~}

\maketitle

\vskip 8mm

\begin{abstract} 
\noindent
We present a new primal-dual splitting algorithm for structured
monotone inclusions in Hilbert spaces and analyze its asymptotic
behavior. A novelty of our framework, which is motivated by 
image recovery applications, is to consider inclusions that
combine a variety of monotonicity-preserving operations such as 
sums, linear compositions, parallel sums, and a new notion of 
parallel composition. The special case of minimization problems is
studied in detail, and applications to signal recovery 
are discussed. Numerical simulations are 
provided to illustrate the implementation of the algorithm.
\end{abstract} 

{\bfseries Keywords} 
duality,
image recovery,
infimal convolution,
monotone operator,
parallel composition,
parallel sum,
proximity operator,
splitting algorithm

{\bfseries Mathematics Subject Classifications (2010)} 
Primary 47H05; Secondary 65K05, 90C25.

\maketitle

\newpage

\section{Introduction}

Let $A$ and $B$ be set-valued monotone operators acting on
a real Hilbert space $\HH$. The first operator splitting 
algorithms were developed in the late 1970s to solve inclusion 
problems of the form 
\begin{equation}
\label{e:1979}
\text{find}\;\;\overline{x}\in\HH\;\;\text{such that}\;\;
0\in A\overline{x}+B\overline{x},
\end{equation}
by using separate applications of the operators $A$ and $B$ 
at each iteration; see
\cite{Opti04,Ecks92,Lion79,Merc79,Tsen91,Tsen00} and the 
references therein. Because of increasingly complex problem
formulations, more sophisticated splitting algorithm have recently
arisen. Thus, the splitting method proposed in 
\cite{Siop11} can solve problems of the type
\begin{equation}
\label{e:2011}
\text{find}\;\;\overline{x}\in\HH\;\;\text{such that}\;\;
0\in A\overline{x}+\sum_{k=1}^r\big(L_k^*\circ B_k\circ
L_k\big)\overline{x},
\end{equation}
where each monotone operator $B_k$ acts on a real Hilbert space
$\GG_k$ and each $L_k$ is a bounded linear operator from $\HH$ to
$\GG_k$. This model was further refined in \cite{Svva12}
by considering inclusions of the form 
\begin{equation}
\label{e:2012}
\text{find}\;\;\overline{x}\in\HH\;\;\text{such that}\;\;
0\in A\overline{x}+\sum_{k=1}^r\big(L_k^*\circ(B_k\infconv D_k)
\circ L_k\big)\overline{x}+C\overline{x},
\end{equation}
where $D_k$ is a monotone operator acting on $\GG_k$
such that $D_k^{-1}$ is Lipschitzian,
\begin{equation} 
\label{e:parsumdef}
B_k\infconv D_k=\big(B_k^{-1}+D_k^{-1}\big)^{-1}
\end{equation}
is the parallel sum of $B_k$ and $D_k$, and $C\colon\HH\to\HH$
is a Lipschitzian monotone operator. More recent developments 
concerning splitting methods for models featuring parallel sums 
can be found in \cite{Botr2013,Botr2014,Bang13}. 
In the present paper, motivated by variational problems arising 
in image recovery, we consider a new type of inclusions that 
involve both parallel sum and ``parallel composition'' operations 
in the sense we introduce below.

\begin{definition}
\label{d:2011-01-05}
Let $\HH$ and $\GG$ be real Hilbert spaces, let 
$A\colon\HH\to 2^{\HH}$, and let $L\in\BL(\HH,\GG)$. Then
the \emph{parallel composition} of $A$ by $L$ is 
\begin{equation}
\label{e:2011-01-05a}
L\pushfwd A=\big(L\circ A^{-1}\circ L^*)^{-1}.
\end{equation}
\end{definition}

The primal-dual inclusion problem under consideration will be the
following (our notation is standard, see Section~\ref{sec:21} for
details).

\begin{problem}
\label{prob:1}
Let $\HH$ be a real Hilbert space, let $r$ be a strictly positive
integer, let $z\in\HH$, let $A\colon\HH\to 2^{\HH}$ be maximally 
monotone, and let $C\colon\HH\to\HH$ be monotone and 
$\mu$-Lipschitzian for some $\mu\in\RP$. 
For every integer $k\in\{1,\ldots,r\}$, let $\GG_k$ and $\KK_k$
be real Hilbert spaces, let 
$B_k\colon\GG_k\to 2^{\GG_k}$ and $D_k\colon\KK_k\to 2^{\KK_k}$ 
be maximally monotone, and let $L_k\in\BL(\HH,\GG_k)$ and
$M_k\in\BL(\HH,\KK_k)$. It is assumed that
\begin{equation}
\label{e:2013-04-30b}
\beta=\mu+\sqrt{\sum_{k=1}^r\|L_k\|^2+\max_{1\leq k\leq r}
\big(\|L_k\|^2+\|M_k\|^2\big)}\:>0
\end{equation}
and that the inclusion
\begin{equation}
\label{e:2013-04-30p}
\text{find}\;\;\overline{x}\in\HH\;\;\text{such that}\;\;
z\in A\overline{x}+\sum_{k=1}^r\big((L_k^*\circ B_k\circ L_k)
\infconv (M_k^*\circ D_k\circ M_k)\big)\overline{x}+C\overline{x}
\end{equation}
possesses at least one solution. Solve \eqref{e:2013-04-30p} 
together with the dual problem
\begin{multline}
\label{e:2013-04-30d}
\text{find}\;\;\overline{v_1}\in\GG_1,\:\ldots,\:
\overline{v_r}\in\GG_r\;\:\text{such that}\;\;
(\forall k\in\{1,\ldots,r\})\\
0\in-L_k\bigg((A+C)^{-1}\bigg(z-\sum_{l=1}^rL_{l}^*
\overline{v_l}\bigg)\bigg)+B_k^{-1}\overline{v_k}
+L_k\Big(\big(M_k^*\pushfwd D_k^{-1}\big)
(L_k^*\overline{v_k})\Big).
\end{multline}
\end{problem}

The paper is organized as follows. In Section~\ref{sec:2} we
define our notation and provide preliminary results. 
In particular,
we establish some basic properties of the parallel composition
operation introduced in Definition~\ref{d:2011-01-05} and discuss
an algorithm recently proposed in \cite{Siop13} that will
serve as a basis for our splitting method. In Section~\ref{sec:3},
our algorithm is presented and weak and strong convergence results
are established. Section~\ref{sec:4} is devoted to the application
of this algorithm to convex minimization problems. Finally, in
Section~\ref{sec:5}, we propose applications of the results of
Section~\ref{sec:4} to a concrete problem in image recovery,
along with numerical results.
 
\section{Notation and preliminary results}
\label{sec:2}

\subsection{Notation and definitions}
\label{sec:21}
The following notation will be used throughout.
$\HH$, $\GG$, and $\KK$ are real Hilbert spaces.
We denote the scalar product of a Hilbert space by 
$\scal{\cdot}{\cdot}$ and the associated norm by $\|\cdot\|$.
The symbols $\weakly$ and $\to$ denote, respectively, weak and 
strong convergence. $\BL(\HH,\GG)$ is the space of bounded 
linear operators from $\HH$ to $\GG$. The Hilbert direct sum 
of $\HH$ and $\GG$ is denoted by $\HH\oplus\GG$.
Given two sequence $(x_n)_{n\in\NN}$ and 
$(y_n)_{n\in\NN}$ in $\HH$, it will be convenient to
use the notation
\begin{equation}
\label{e:jeddah2013-05-23a}
\big[(\forall n\in\NN)\;\: x_n\approx y_n\big]
\quad\Leftrightarrow\quad\sum_{n\in\NN}\|x_n-y_n\|<\pinf
\end{equation}
to model the tolerance to errors in the implementation of the
algorithms. 

The power set of $\HH$ is denoted by $2^\HH$.
Let $A\colon\HH\to 2^{\HH}$ be a set-valued operator.
We denote by $\ran A=\menge{u\in\HH}{(\exi x\in\HH)\;u\in Ax}$ 
the range of $A$, by $\dom A=\menge{x\in\HH}{Ax\neq\emp}$ the
domain of $A$, by $\gra A=\menge{(x,u)\in\HH\times\HH}{u\in Ax}$ 
the graph of $A$, and by $A^{-1}$ the inverse of $A$, i.e., 
the set-valued operator with graph
$\menge{(u,x)\in\HH\times\HH}{u\in Ax}$. The resolvent of $A$ is
$J_A=(\Id+A)^{-1}$. Moreover, $A$ is monotone if
\begin{equation}
(\forall (x,y)\in\HH\times\HH)(\forall(u,v)\in Ax\times Ay)\quad
\scal{x-y}{u-v}\geq 0,
\end{equation}
and maximally monotone if there exists no monotone operator 
$B\colon\HH\to 2^{\HH}$ such that $\gra A\subset\gra B\neq\gra A$.
In this case, $J_A$ is a single-valued, nonexpansive operator 
defined everywhere in $\HH$. 
We say that $A$ is uniformly monotone at $x\in\dom A$ if there 
exists an increasing function $\phi\colon\RP\to\RPX$ that vanishes 
only at $0$ such that
\begin{equation}
\label{e:Unifmon}
(\forall u\in Ax)(\forall (y,v)\in\gra A)\quad
\scal{x-y}{u-v}\geq\phi(\|x-y\|).
\end{equation}
We denote by $\Gamma_0(\HH)$ the class of lower semicontinuous 
convex functions $f\colon\HH\to\RX$ such that
$\dom f=\menge{x\in\HH}{f(x)<\pinf}\neq\emp$. Let
$f\in\Gamma_0(\HH)$. The conjugate of $f$ is the function 
$f^*\in\Gamma_0(\HH)$ defined by 
$f^*\colon u\mapsto\text{sup}_{x\in\HH}(\scal{x}{u}-f(x))$. 
For every $x\in\HH$, $f+\|x-\cdot\|^2/2$ possesses a 
unique minimizer, which is denoted by $\prox_fx$. The operator
$\prox_f$ can also be defined as a resolvent, namely
\begin{equation}
\label{e:prox2}
\prox_f=(\Id+\partial f)^{-1}=J_{\partial f},
\end{equation}
where $\partial f\colon\HH\to 2^{\HH}\colon x\mapsto
\menge{u\in\HH}{(\forall y\in\HH)\;\:\scal{y-x}{u}+f(x)\leq f(y)}$ 
is the subdifferential of $f$, which is maximally monotone.
We say that $f$ uniformly convex at $x\in\dom f$ if there exists 
an increasing function $\phi\colon\RP\to\RPX$ that vanishes only 
at $0$ such that
\begin{equation}
\label{e:Unifconvex}
(\forall y\in\dom f)(\forall\alpha\in\zeroun)\quad
f(\alpha x+(1-\alpha)y)+\alpha(1-\alpha)\phi(\|x-y\|)\leq
\alpha f(x)+(1-\alpha)f(y).
\end{equation}
The infimal convolution of two functions $f_1$ and 
$f_2$ from $\HH$ to $\RX$ is
\begin{equation}
\label{e:J1}
f_1\infconv f_2\colon\HH\to\RXX\colon x\mapsto
\inf_{y\in\HH}\big(f_1(x-y)+f_2(y)\big),
\end{equation}
and the infimal postcomposition of $f\colon\HH\to\RXX$
by $L\colon\HH\to\GG$ is
\begin{equation}
\label{e:J2}
L\pushfwd f\colon\GG\to\RXX\colon y\mapsto
\inf f\big(L^{-1}\{y\}\big)=\inf_{\substack{x\in\HH\\ Lx=y}}f(x).
\end{equation}
Let $C$ be a convex subset of $\HH$. 
The indicator function of $C$ is denoted by $\iota_C$, 
and the strong relative interior of $C$, i.e.,
the set of points $x\in C$ such that the cone generated by 
$-x+C$ is a closed vector subspace of $\HH$, by $\sri C$.

For a detailed account of the above concepts, see \cite{Livre1}.

\subsection{Parallel composition}

In this section we explore some basic properties of the parallel
composition operation introduced in Definition~\ref{d:2011-01-05}
which are of interest in their own right. First, we justify the 
terminology via the following connection with the parallel sum.

\begin{lemma}
\label{l:3}
Let $A\colon\HH\to 2^{\HH}$, let $B\colon\HH\to 2^{\HH}$, and let 
$L\colon\HH\oplus\HH\to\HH\colon (x,y)\mapsto x+y$.
Then $L\pushfwd(A\times B)=A\infconv B$.
\end{lemma}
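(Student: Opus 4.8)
The plan is to unwind both sides of the claimed identity directly from the definitions and show that the graphs coincide. Recall that $L\colon\HH\oplus\HH\to\HH$ is the sum map $(x,y)\mapsto x+y$, so its adjoint is $L^*\colon\HH\to\HH\oplus\HH\colon u\mapsto(u,u)$. By Definition~\ref{d:2011-01-05}, $L\pushfwd(A\times B)=\big(L\circ(A\times B)^{-1}\circ L^*\big)^{-1}$. Since $(A\times B)^{-1}=A^{-1}\times B^{-1}$, the inner composition sends $u\mapsto L^*u=(u,u)\mapsto (A^{-1}\times B^{-1})(u,u)=A^{-1}u\times B^{-1}u\mapsto L(A^{-1}u\times B^{-1}u)=A^{-1}u+B^{-1}u$. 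Thus $L\circ(A\times B)^{-1}\circ L^*=A^{-1}+B^{-1}$ as set-valued operators, and taking the inverse gives $L\pushfwd(A\times B)=(A^{-1}+B^{-1})^{-1}=A\infconv B$ by the definition \eqref{e:parsumdef} of the parallel sum.

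Concretely, I would phrase this as a graph computation to be fully rigorous about the set-valued arithmetic: fix $(u,w)\in\HH\times\HH$ and show $w\in(L\pushfwd(A\times B))u$ if and only if $w\in(A\infconv B)u$. The left-hand condition unwinds to $u\in\big(L\circ(A\times B)^{-1}\circ L^*\big)w$, i.e.\ there exist $p,q\in\HH$ with $p\in A^{-1}w$, $q\in B^{-1}w$, and $u=p+q$; equivalently $w\in Ap$, $w\in Bq$, $u=p+q$. The right-hand condition, $w\in(A\infconv B)u=(A^{-1}+B^{-1})^{-1}u$, unwinds to $u\in A^{-1}w+B^{-1}w$, i.e.\ there exist $p\in A^{-1}w$ and $q\in B^{-1}w$ with $u=p+q$. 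These two descriptions are identical, which proves the claim.

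There is essentially no obstacle here: the only mild subtlety is being careful that the composition of set-valued maps and the operation of taking inverses are handled at the level of graphs rather than formally, so that the domains match up correctly (in particular, $L^*$ is single-valued and surjective onto the diagonal, so no elements are lost when precomposing). Once the bookkeeping with the diagonal embedding $u\mapsto(u,u)$ and its transpose is made explicit, the identity $L\circ(A\times B)^{-1}\circ L^*=A^{-1}+B^{-1}$ is immediate, and the result follows by one further inversion.
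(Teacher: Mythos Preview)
Your proof is correct and follows essentially the same route as the paper: identify $L^*$ as the diagonal embedding $u\mapsto(u,u)$, observe that $L\circ(A\times B)^{-1}\circ L^*=A^{-1}+B^{-1}$, and then invert using \eqref{e:parsumdef} and \eqref{e:2011-01-05a}. The paper states this in one sentence, whereas you additionally spell out the graph computation, but the underlying argument is identical.
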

\begin{proof}
Since $L^*\colon\HH\to\HH\oplus\HH\colon x\mapsto(x,x)$, the
announced identity is an immediate consequence of 
\eqref{e:parsumdef} and \eqref{e:2011-01-05a}.
\end{proof}

\begin{lemma}
\label{l:2}
Let $A\colon\HH\to 2^{\HH}$, let $B\colon\GG\to 2^{\GG}$,
and let $L\in\BL(\HH,\GG)$. Then the following hold.
\begin{enumerate}
\item
\label{l:2i}
$((L\pushfwd A)\infconv B)^{-1}=L\circ A^{-1}\circ L^*+B^{-1}$.
\item
\label{l:2ii}
Suppose that $A$ and $B$ are monotone. Then 
$(L\pushfwd A)\infconv B$ is monotone.
\item
\label{l:2iii}
Suppose that $A$ and $B$ are maximally monotone and that the 
cone generated by $L^*(\ran B)-\ran A$ is a closed vector subspace.
Then $(L\pushfwd A)\infconv B$ is maximally monotone.
\item
\label{l:2iv}
Suppose that $A$ is maximally monotone and that the
cone generated by $\ran L^*+\ran A$ is a closed vector subspace.
Then $L\pushfwd A$ is maximally monotone.
\end{enumerate}
\end{lemma}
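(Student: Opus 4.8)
The plan is to reduce everything to Definition~\ref{d:2011-01-05}, the defining identity~\eqref{e:parsumdef} of the parallel sum, and the standard stability of (maximal) monotonicity under inversion, addition, and linear composition.

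To prove~\ref{l:2i} I would unwind the definitions: by~\eqref{e:parsumdef}, $(L\pushfwd A)\infconv B=\big((L\pushfwd A)^{-1}+B^{-1}\big)^{-1}$, so taking inverses and using $(L\pushfwd A)^{-1}=L\circ A^{-1}\circ L^*$, which is~\eqref{e:2011-01-05a}, gives the announced identity. For~\ref{l:2ii} I would note that $A$ monotone forces $A^{-1}$ monotone, hence $L\circ A^{-1}\circ L^*$ monotone via $\scal{x-y}{Lw-Lw'}=\scal{L^*x-L^*y}{w-w'}$; since $B^{-1}$ is monotone and a sum of monotone operators is monotone,~\ref{l:2i} shows $((L\pushfwd A)\infconv B)^{-1}$ is monotone, and as the inverse of a monotone operator is monotone, so is $(L\pushfwd A)\infconv B$.

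For~\ref{l:2iv}, the first step is that $L\pushfwd A$ is maximally monotone if and only if its inverse $L\circ A^{-1}\circ L^*$ is, since inversion preserves maximal monotonicity. Now $A^{-1}$ is maximally monotone, and $L\circ A^{-1}\circ L^*$ is a maximally monotone operator composed with a bounded linear operator on the right and its adjoint on the left; under the hypothesis that $\cone(\ran L^*+\ran A)$ is a closed vector subspace — equivalently, as $\ran L^*$ is a subspace, $\cone(\ran L^*-\dom A^{-1})$ — the theorem on linear compositions of maximally monotone operators (see~\cite{Livre1}) gives maximal monotonicity of $L\circ A^{-1}\circ L^*$, hence of $L\pushfwd A$.

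Finally I would deduce~\ref{l:2iii} from~\ref{l:2iv} by a product-space device. Put $K\colon\HH\oplus\GG\to\GG\colon(x,y)\mapsto Lx+y$, so that $K^*\colon v\mapsto(L^*v,v)$, and check by a direct computation that $K\circ(A\times B)^{-1}\circ K^*=L\circ A^{-1}\circ L^*+B^{-1}$; by~\ref{l:2i} this says exactly that $(L\pushfwd A)\infconv B=K\pushfwd(A\times B)$. Since $A\times B$ is maximally monotone,~\ref{l:2iv} applied to $K$ reduces the claim to checking that $\cone(\ran K^*+\ran(A\times B))$ is a closed vector subspace. Here I would use that $\ran K^*$ is a closed subspace of $\HH\oplus\GG$ (the second coordinate of $K^*v$ recovers $v$) together with the identity $\ran K^*+\ran(A\times B)=\ran K^*+(\ran A-L^*(\ran B))\times\{0\}$; writing $W=\cone(L^*(\ran B)-\ran A)$, a closed subspace by hypothesis so that $W=-W$, this yields $\cone(\ran K^*+\ran(A\times B))=\ran K^*+W\times\{0\}$, and a short argument (if $(L^*v_n+w_n,v_n)\to(p,q)$ then $v_n\to q$ and $w_n\to p-L^*q\in W$) shows this sum of closed subspaces is closed. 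The only genuine content beyond bookkeeping is thus the translation between the two qualification conditions and the applicability of the composition theorem in the form needed for~\ref{l:2iv}; everything else is formal manipulation of Definition~\ref{d:2011-01-05} and~\eqref{e:parsumdef}. I expect that bridging the two qualification conditions will be the main obstacle.
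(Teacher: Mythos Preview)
Your arguments for \ref{l:2i} and \ref{l:2ii} match the paper's exactly. For \ref{l:2iii} and \ref{l:2iv}, however, you reverse the paper's logical order: the paper proves \ref{l:2iii} first, in one stroke, by citing a combined ``sum plus linear composition'' maximal monotonicity criterion from \cite[Section~24]{Botr10} applied directly to $L\circ A^{-1}\circ L^*+B^{-1}$ under the domain condition $L^*(\dom B^{-1})-\dom A^{-1}$; then \ref{l:2iv} drops out by specializing to $B=\{0\}^{-1}$. You instead establish \ref{l:2iv} first from the pure composition criterion in \cite{Livre1}, and then lift to \ref{l:2iii} via the product-space identification $(L\pushfwd A)\infconv B=K\pushfwd(A\times B)$ with $K(x,y)=Lx+y$.

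Your route is correct: the identity $\ran K^*+\ran(A\times B)=\ran K^*+(\ran A-L^*(\ran B))\times\{0\}$ is easily verified, and the closedness argument you sketch (projecting onto the second coordinate to recover $v$) goes through because $\ran K^*$ is the graph of $L^*$ and hence closed. The trade-off is that the paper's approach is shorter---one citation handles \ref{l:2iii} and \ref{l:2iv} is then a one-line specialization---at the cost of invoking a somewhat heavier off-the-shelf result. Your approach needs only the more elementary composition theorem and makes the reduction of the sum to a composition explicit, but you pay for it with the product-space bookkeeping and the qualification-condition translation. Both are perfectly valid; your anticipated ``main obstacle'' of bridging the qualification conditions is in fact handled cleanly by the argument you already outline.
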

\begin{proof}
\ref{l:2i}: This follows easily from \eqref{e:parsumdef} and 
\eqref{e:2011-01-05a}.

\ref{l:2ii}: By \ref{l:2i},
$((L\pushfwd A)\infconv B)^{-1}=L\circ A^{-1}\circ L^*+B^{-1}$.
Since $A^{-1}$ and $B^{-1}$ are monotone and monotonicity is 
preserved under inversion and this type of transformation 
\cite[Proposition~20.10]{Livre1}, the assertion is proved.

\ref{l:2iii}: The operators $A^{-1}$ and $B^{-1}$ are maximally
monotone \cite[Proposition~20.22]{Livre1} and 
$L^*(\ran B)-\ran A=L^*(\dom B^{-1})-\dom A^{-1}$. 
Hence, $L\circ A^{-1}\circ L^*+B^{-1}$ is maximally monotone 
\cite[Section~24]{Botr10} and so is its inverse which, in view 
of \ref{l:2i}, is $(L\pushfwd A)\infconv B$.

\ref{l:2iv}: Set $B=\{0\}^{-1}$ in \ref{l:2iii}.
\end{proof}

\begin{lemma}
\label{l:4}
Let $A\colon\HH\to 2^{\HH}$, let $B\colon\HH\to 2^{\HH}$,
and let $L\in\BL(\HH,\GG)$. 
Then $L\pushfwd(A\infconv B)=(L\pushfwd A)\infconv(L\pushfwd B)$.
\end{lemma}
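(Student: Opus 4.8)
The plan is to unfold both sides using only the definitions \eqref{e:parsumdef} and \eqref{e:2011-01-05a}, the involutivity of inversion of set-valued operators, and the elementary fact that a bounded linear operator sends a Minkowski sum of sets to the Minkowski sum of their images; no monotonicity or domain hypothesis is needed, since every identity below is an equality of graphs.

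First I would treat the left-hand side. By \eqref{e:2011-01-05a}, $L\pushfwd(A\infconv B)=(L\circ(A\infconv B)^{-1}\circ L^*)^{-1}$, while \eqref{e:parsumdef} together with involutivity of inversion gives $(A\infconv B)^{-1}=A^{-1}+B^{-1}$. The key step is then to check the distributivity identity $L\circ(A^{-1}+B^{-1})\circ L^*=(L\circ A^{-1}\circ L^*)+(L\circ B^{-1}\circ L^*)$: indeed, for every $v\in\GG$,
\[
\big(L\circ(A^{-1}+B^{-1})\circ L^*\big)v
=L\big(A^{-1}(L^*v)+B^{-1}(L^*v)\big)
=L\big(A^{-1}(L^*v)\big)+L\big(B^{-1}(L^*v)\big),
\]
where the second equality uses the linearity of $L$. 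Hence $L\pushfwd(A\infconv B)=\big((L\circ A^{-1}\circ L^*)+(L\circ B^{-1}\circ L^*)\big)^{-1}$.

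Finally I would expand the right-hand side in the same way: \eqref{e:2011-01-05a} and involutivity yield $(L\pushfwd A)^{-1}=L\circ A^{-1}\circ L^*$ and $(L\pushfwd B)^{-1}=L\circ B^{-1}\circ L^*$, so by \eqref{e:parsumdef},
\[
(L\pushfwd A)\infconv(L\pushfwd B)
=\big((L\pushfwd A)^{-1}+(L\pushfwd B)^{-1}\big)^{-1}
=\big((L\circ A^{-1}\circ L^*)+(L\circ B^{-1}\circ L^*)\big)^{-1},
\]
which coincides with the expression found for the left-hand side, proving the claim. I do not expect a genuine obstacle here: the argument is a bookkeeping manipulation of the two defining formulas, in the same spirit as the proof of Lemma~\ref{l:2}\ref{l:2i}, and the only point deserving explicit mention is the distributivity of $L$ over Minkowski sums.
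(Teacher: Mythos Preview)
Your argument is correct and follows essentially the same approach as the paper: unfold both sides via \eqref{e:2011-01-05a} and \eqref{e:parsumdef}, and use the distributivity $L\circ(A^{-1}+B^{-1})\circ L^*=L\circ A^{-1}\circ L^*+L\circ B^{-1}\circ L^*$ to match them. The only difference is that you spell out the Minkowski-sum justification for this distributivity step, which the paper leaves implicit.
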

\begin{proof}
It follows from \eqref{e:parsumdef} and \eqref{e:2011-01-05a} that
\begin{align}
L\pushfwd(A\infconv B)
&=\big(L\circ(A\infconv B)^{-1}\circ L^*\big)^{-1}\nonumber\\
&=\big(L\circ(A^{-1}+B^{-1})\circ L^*\big)^{-1}\nonumber\\
&=\big(L\circ A^{-1}\circ L^*+L\circ B^{-1}\circ L^*\big)^{-1}
\nonumber\\
&=\big((L\pushfwd A)^{-1}+(L\pushfwd B)^{-1}\big)^{-1}
\nonumber\\
&=(L\pushfwd A)\infconv(L\pushfwd B),
\end{align}
which proves the announced identity.
\end{proof}

\begin{lemma}
\label{l:5}
Let $A\colon\HH\to 2^{\HH}$, let $L\in\BL(\HH,\GG)$, and
let $M\in\BL(\GG,\KK)$. Then 
$M\pushfwd(L\pushfwd A)=(M\circ L)\pushfwd A$.
\end{lemma}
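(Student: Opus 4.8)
The plan is to unwind both sides using only the definition \eqref{e:2011-01-05a} of parallel composition and the elementary algebra of inverses and compositions of set-valued operators. First I would write
\[
M\pushfwd(L\pushfwd A)=\big(M\circ(L\pushfwd A)^{-1}\circ M^*\big)^{-1},
\]
and then substitute $(L\pushfwd A)^{-1}=L\circ A^{-1}\circ L^*$, which is exactly what \eqref{e:2011-01-05a} gives upon taking inverses (using that inversion is an involution on set-valued operators). This turns the right-hand factor into $M\circ L\circ A^{-1}\circ L^*\circ M^*$.

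Next I would invoke associativity of composition of set-valued operators to regroup $M\circ L\circ A^{-1}\circ L^*\circ M^*=(M\circ L)\circ A^{-1}\circ(L^*\circ M^*)$, and then identify $L^*\circ M^*=(M\circ L)^*$, which is the standard adjoint identity for bounded linear operators between Hilbert spaces. Setting $N=M\circ L\in\BL(\HH,\KK)$, the expression becomes $N\circ A^{-1}\circ N^*$, so
\[
M\pushfwd(L\pushfwd A)=\big(N\circ A^{-1}\circ N^*\big)^{-1}=N\pushfwd A=(M\circ L)\pushfwd A,
\]
again by \eqref{e:2011-01-05a}. This is essentially a one-line computation once the two ingredients — involutivity of inversion and the adjoint-of-a-composition identity — are in hand.

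I do not anticipate a genuine obstacle here; the only point requiring a modicum of care is that these are identities of set-valued operators (equalities of graphs), so one should be mildly attentive that each step is a true equality rather than merely an inclusion. In particular, $(L\pushfwd A)^{-1}=L\circ A^{-1}\circ L^*$ holds as sets of pairs directly from \eqref{e:2011-01-05a} with no monotonicity or maximality hypothesis needed, and the composition/adjoint manipulations are purely formal. The proof is therefore short and parallels the style of the proofs of Lemmas~\ref{l:3}--\ref{l:4}.
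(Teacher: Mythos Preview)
Your proof is correct and follows exactly the same route as the paper's own proof, which is the one-line computation
\[
M\pushfwd(L\pushfwd A)=(M\circ(L\pushfwd A)^{-1}\circ M^*)^{-1}=(M\circ L\circ A^{-1}\circ L^*\circ M^*)^{-1}=(M\circ L)\pushfwd A.
\]
Your additional remarks on involutivity of inversion, associativity, and the adjoint identity $L^*\circ M^*=(M\circ L)^*$ simply make explicit the justifications the paper leaves implicit.
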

\begin{proof}
Indeed, $M\pushfwd(L\pushfwd A)
=(M\circ(L\pushfwd A)^{-1}\circ M^*)^{-1}
=(M\circ L\circ A^{-1}\circ L^* \circ M^*)^{-1}
=(M\circ L)\pushfwd A$.
\end{proof}

Finally, the next lemma draws connections with the infimal 
convolution and postcomposition operations of \eqref{e:J1} 
and \eqref{e:J2}.

\begin{lemma}
\label{l:1}
Let $f\in\Gamma_0(\HH)$, let $g\in\Gamma_0(\GG)$, and let 
$L\in\BL(\HH,\GG)$ be such that
$0\in\sri(L^*(\dom g^*)-\dom f^*)$. Then the following hold.
\begin{enumerate}
\item
\label{l:1i}
$(L\pushfwd f)\infconv g\in\Gamma_0(\GG)$.
\item
\label{l:1ii}
$\partial\big((L\pushfwd f)\infconv g\big)=
(L\pushfwd\partial f)\infconv\partial g$. 
\end{enumerate}
\end{lemma}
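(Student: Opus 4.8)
The plan is to deduce both assertions from the standard duality calculus for infimal convolution, subdifferentials, and Fenchel--Rockafellar, combined with the fact that the infimal postcomposition $L\pushfwd f$ is, on the level of conjugates, just a linear precomposition. The key elementary identities I would invoke are $(L\pushfwd f)^* = f^*\circ L^*$ and, dually, $(f\infconv g)^* = f^*+g^*$, both valid in $\Gamma_0$ under the qualification condition stated (see \cite{Livre1}); the hypothesis $0\in\sri(L^*(\dom g^*)-\dom f^*)$ is precisely what guarantees exactness and closedness at each step, so that no lower-semicontinuity is lost.

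For \ref{l:1i}, first I would record that $L\pushfwd f$ is convex with $(L\pushfwd f)^* = f^*\circ L^*\in\Gamma_0(\GG)$, hence $L\pushfwd f\in\Gamma_0(\GG)$ once we know it is proper and lsc; properness of $(L\pushfwd f)\infconv g$ will follow from the qualification condition, which forces $\dom(f^*\circ L^*)-\dom g^* \ni 0$ in a strong sense. Concretely, I would compute
\begin{equation}
\big((L\pushfwd f)\infconv g\big)^* = (L\pushfwd f)^* + g^* = f^*\circ L^* + g^*,
\end{equation}
which lies in $\Gamma_0(\GG)$ because it is a sum of two functions in $\Gamma_0(\GG)$ whose domains have $0$ in the strong relative interior of their difference, hence the sum is proper; then biconjugation gives $(L\pushfwd f)\infconv g = (f^*\circ L^*+g^*)^*\in\Gamma_0(\GG)$, and along the way the infimal convolution is exact, so it agrees with the genuine (not merely formal) convolution.

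For \ref{l:1ii}, the idea is that the qualification condition propagates from functions to subdifferentials. Applying \cite[Proposition~20.10]{Livre1} (the subdifferential of $L\pushfwd f$) one gets $\partial(L\pushfwd f) = L\pushfwd\partial f$ on the appropriate domain, using that $0\in\sri(\ran L^* - \dom f^*)$ is implied by the hypothesis; similarly the sum rule for subdifferentials of an infimal convolution, $\partial(h\infconv g) = \partial h\infconv\partial g$, holds under $0\in\sri(\dom h^* - \dom g^*)$ (Attouch--Brezis-type condition, see \cite{Livre1}). Taking $h = L\pushfwd f$ and combining the two gives $\partial\big((L\pushfwd f)\infconv g\big) = (L\pushfwd\partial f)\infconv\partial g$, which is the claim. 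The main obstacle I anticipate is bookkeeping with the qualification condition: one must check that $0\in\sri(L^*(\dom g^*)-\dom f^*)$ really does supply each of the separate interiority hypotheses needed for (a) exactness of the postcomposition and of the convolution, (b) the subdifferential chain/postcomposition rule, and (c) the subdifferential sum rule for the convolution — in particular that $\dom(f^*\circ L^*) = (L^*)^{-1}(\dom f^*)$ behaves well under the $\sri$ operation, which is where a short lemma on $\sri$ of preimages under bounded linear maps may be needed rather than a direct citation.
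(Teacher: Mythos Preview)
Your argument for \ref{l:1i} is essentially the paper's: compute $\big((L\pushfwd f)\infconv g\big)^*=f^*\circ L^*+g^*$, observe this lies in $\Gamma_0(\GG)$, and biconjugate.

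For \ref{l:1ii}, however, your route diverges from the paper's and carries a real gap that you yourself flag but do not close. You propose to chain two primal-side rules: first $\partial(L\pushfwd f)=L\pushfwd\partial f$, which needs $0\in\sri(\ran L^*-\dom f^*)$, and then $\partial(h\infconv g)=\partial h\infconv\partial g$ with $h=L\pushfwd f$, which needs $0\in\sri\big((L^*)^{-1}(\dom f^*)-\dom g^*\big)$ (and also $L\pushfwd f\in\Gamma_0(\GG)$, which you have not established separately). Neither of these two interiority conditions is the hypothesis $0\in\sri(L^*(\dom g^*)-\dom f^*)$, and the passage from the latter to the former pair is not a matter of bookkeeping: $L^*(\dom g^*)\subset\ran L^*$ does not by itself transfer an $\sri$ condition, and the preimage $(L^*)^{-1}(\dom f^*)$ lives in $\GG$ while the hypothesis lives in $\HH$. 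Your ``short lemma on $\sri$ of preimages'' would have to do genuine work here, and you have not supplied it. (Incidentally, \cite[Proposition~20.10]{Livre1} concerns monotonicity under composition, not subdifferentials of postcompositions.)

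The paper avoids this entirely by dualizing. Since part \ref{l:1i} gives $(L\pushfwd f)\infconv g\in\Gamma_0(\GG)$ with conjugate $f^*\circ L^*+g^*$, one has $\big(\partial((L\pushfwd f)\infconv g)\big)^{-1}=\partial(f^*\circ L^*+g^*)$. The hypothesis $0\in\sri(L^*(\dom g^*)-\dom f^*)$ is \emph{exactly} the qualification required by \cite[Theorem~16.37(i)]{Livre1} to split $\partial(f^*\circ L^*+g^*)=L\circ(\partial f^*)\circ L^*+\partial g^*$. Rewriting $\partial f^*=(\partial f)^{-1}$ and $\partial g^*=(\partial g)^{-1}$ and inverting once more yields $(L\pushfwd\partial f)\infconv\partial g$ via Lemma~\ref{l:2}\ref{l:2i}. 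The point is that the single qualification condition is tailor-made for the \emph{dual} sum rule, so one should apply it there rather than try to manufacture two separate primal-side conditions from it.
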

\begin{proof}
\ref{l:1i}: Since $0\in L^*(\dom g^*)-\dom f^*$ and, by
the Fenchel-Moreau theorem \cite[Theorem~13.32]{Livre1},
$f^*\in\Gamma_0(\HH)$ and $g^*\in\Gamma_0(\GG)$, we have
$f^*\circ L^*+g^*\in\Gamma_0(\GG)$. Hence
$(f^*\circ L^*+g^*)^*\in\Gamma_0(\GG)$. However, in view of
\cite[Theorem~15.27(i)]{Livre1}, the assumptions also imply that
$(f^*\circ L^*+g^*)^*=(L\pushfwd f)\infconv g$.

\ref{l:1ii}: Let $y$ and $v$ be in $\GG$. Then \ref{l:1i}, 
\cite[Corollary~16.24, Proposition~13.21(i)\&(iv), and
Theorem~16.37(i)]{Livre1} enable us to write
\begin{eqnarray}
v\in\partial\big((L\pushfwd f)\infconv g\big)(y)
&\Leftrightarrow& 
y\in\Big(\partial\big((L\pushfwd f)\infconv g\big)\Big)^{-1}(v)
\nonumber\\
&\Leftrightarrow& 
y\in\partial\big((L\pushfwd f)\infconv g\big)^{*}(v)
\nonumber\\
&\Leftrightarrow& y\in\partial (f^*\circ L^*+g^*)(v)\nonumber\\
&\Leftrightarrow& y\in(L\circ(\partial f^*)\circ L^*+
\partial g^*)(v)\nonumber\\
&\Leftrightarrow& y\in\big(L\circ(\partial f)^{-1}\circ L^*+
(\partial g)^{-1}\big)(v)\nonumber\\
&\Leftrightarrow& v\in\big((L\pushfwd\partial f)
\infconv\partial g\big)y,
\end{eqnarray}
which establishes the announced identity.
\end{proof}

\begin{corollary}
\label{c:1}
Let $f\in\Gamma_0(\HH)$ and let $L\in\BL(\HH,\GG)$ be such that
$0\in\sri(\ran L^*-\dom f^*)$. Then the following hold.
\begin{enumerate}
\item
\label{c:1i}
$L\pushfwd f\in\Gamma_0(\GG)$.
\item
\label{c:1ii}
$\partial (L\pushfwd f)=L\pushfwd\partial f$. 
\end{enumerate}
\end{corollary}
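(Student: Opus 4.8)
The plan is to obtain Corollary~\ref{c:1} as the special case of Lemma~\ref{l:1} in which $\GG$ is replaced by $\KK$ and the second function $g$ is taken to be the indicator function of the zero subspace of $\GG$, i.e., $g=\iota_{\{0\}}$. First I would record the elementary facts that $g=\iota_{\{0\}}\in\Gamma_0(\GG)$, that $g^*\colon v\mapsto 0$ is the zero function on $\GG$, so that $\dom g^*=\GG$, and that $\partial g=N_{\{0\}}$ is the operator sending $0$ to $\GG$ and every other point to $\emp$, which coincides with $\{0\}^{-1}$. Consequently $L^*(\dom g^*)-\dom f^*=L^*(\GG)-\dom f^*=\ran L^*-\dom f^*$, so the qualification condition $0\in\sri(L^*(\dom g^*)-\dom f^*)$ of Lemma~\ref{l:1} reduces exactly to the hypothesis $0\in\sri(\ran L^*-\dom f^*)$ assumed here.

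Next I would verify the two identities $f\infconv g=f$ (equivalently $f\infconv\iota_{\{0\}}=f$, since the infimum in \eqref{e:J1} over $y\in\GG$ is attained only at $y=0$) and, for monotone operators, $\partial f\infconv\partial g=\partial f$ when $\partial g=\{0\}^{-1}$, which follows from \eqref{e:parsumdef} because $(\partial f)^{-1}+(\partial g)^{-1}=(\partial f)^{-1}+0=(\partial f)^{-1}$, so that $\partial f\infconv\partial g=((\partial f)^{-1})^{-1}=\partial f$. With these reductions in hand, part~\ref{c:1i} is immediate: Lemma~\ref{l:1}\ref{l:1i} gives $(L\pushfwd f)\infconv g\in\Gamma_0(\KK)$, and since $g=\iota_{\{0\}}$ this left-hand side is simply $(L\pushfwd f)\infconv\iota_{\{0\}}=L\pushfwd f$, so $L\pushfwd f\in\Gamma_0(\KK)$.

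For part~\ref{c:1ii}, I would apply Lemma~\ref{l:1}\ref{l:1ii} to get $\partial((L\pushfwd f)\infconv g)=(L\pushfwd\partial f)\infconv\partial g$, then rewrite the left-hand side using $(L\pushfwd f)\infconv g=L\pushfwd f$ and the right-hand side using $(L\pushfwd\partial f)\infconv\partial g=(L\pushfwd\partial f)\infconv\{0\}^{-1}=L\pushfwd\partial f$ (again by \eqref{e:parsumdef}, exactly as in the proof of Lemma~\ref{l:2}\ref{l:2iv}). Combining the two yields $\partial(L\pushfwd f)=L\pushfwd\partial f$, as claimed.

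I do not anticipate a genuine obstacle here; the only point that warrants a moment's care is making the substitution $g=\iota_{\{0\}}$ cleanly and checking that every object appearing in Lemma~\ref{l:1} degenerates as expected—in particular that $\partial\iota_{\{0\}}$ is indeed $\{0\}^{-1}$ so that the parallel sum collapses correctly, and that the strong-relative-interior condition really does simplify to the stated one. Alternatively, if one prefers not to invoke Lemma~\ref{l:1}, the corollary can be proved directly in the same manner as that lemma: part~\ref{c:1i} from $L\pushfwd f=(f^*\circ L^*)^*$ via \cite[Theorem~15.27]{Livre1}, and part~\ref{c:1ii} by the same chain of equivalences through $\partial(f^*\circ L^*)=L\circ(\partial f^*)\circ L^*=L\circ(\partial f)^{-1}\circ L^*$; but the reduction route is shorter and is the one I would present.
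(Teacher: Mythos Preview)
Your approach is exactly the paper's: the proof there reads, in its entirety, ``Set $g=\iota_{\{0\}}$ in Lemma~\ref{l:1}.'' Your write-up simply supplies the routine verifications behind that one line (and is correct), though note the minor slip that the target space should be $\GG$, not $\KK$, since $L\in\BL(\HH,\GG)$ in both Lemma~\ref{l:1} and the corollary.
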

\begin{proof}
Set $g=\iota_{\{0\}}$ in Lemma~\ref{l:1}.
\end{proof}

\subsection{An inclusion problem}

Our main result in Section~\ref{sec:3} will hinge on rewriting 
Problem~\ref{prob:1} as an instance of the following formulation.

\begin{problem}
\label{prob:10}
Let $m$ and $K$ be strictly positive integers, let
$(\HH_i)_{1\leq i\leq m}$ and $(\GG_k)_{1\leq k\leq K}$ be real 
Hilbert spaces, and let $(\mu_i)_{1\leq i\leq m}\in\RP^m$. 
For every $i\in\{1,\ldots,m\}$ 
and $k\in\{1,\ldots,K\}$, let $C_i\colon\HH_i\to\HH_i$ be 
monotone and $\mu_i$-Lipschitzian, let 
$A_i\colon\HH_i\to 2^{\HH_i}$ and $B_k\colon\GG_k\to 2^{\GG_k}$ 
be maximally monotone,  let $z_i\in\HH_i$, 
and let $L_{ki}\in\BL(\HH_i,\GG_k)$. It is assumed that 
\begin{equation}
\label{e:5h8Njiq-11a}
\beta=\sqrt{\lambda}+\underset{1\leq i\leq m}{\text{max}}\mu_i>0,
\quad\text{where}\quad
\lambda\in\left[\underset{\sum_{i=1}^m\|x_i\|^2\leq 1}{\text{sup}}
\sum_{k=1}^K\bigg\|\sum_{i=1}^mL_{ki}x_i\bigg\|^2,\pinf\right[,
\end{equation}
and that the system of coupled inclusions
\begin{multline}
\label{e:IJ834hj8fr-24p}
\text{find}\;\;\overline{x_1}\in\HH_1,\ldots,\overline{x_m}\in\HH_m
\;\;\text{such that}\\
\begin{cases}
z_1&\!\!\!\in A_1\overline{x_1}+\Sum_{k=1}^KL_{k1}^*
\bigg(B_k\bigg(\Sum_{i=1}^mL_{ki}\overline{x_i}\bigg)\bigg)
+C_1\overline{x_1}\\
&\!\!\!\;\vdots\\[-3mm]
z_m&\!\!\!\in A_m\overline{x_m}+\Sum_{k=1}^KL_{km}^*
\bigg(B_k\bigg(\Sum_{i=1}^m
L_{ki}\overline{x_i}\bigg)\bigg)+C_m\overline{x_m}
\end{cases}
\end{multline}
possesses at least one solution. Solve \eqref{e:IJ834hj8fr-24p} 
together with the dual problem
\begin{multline}
\label{e:IJ834hj8fr-24d}
\text{find}\;\;\overline{v_1}\in\GG_1,\ldots,\overline{v_K}
\in\GG_K\;\;\text{such that}\\
\begin{cases}
0&\!\!\!
\in-\Sum_{i=1}^mL_{1i}\big(A_i+C_i\big)^{-1}
\bigg(z_i-\Sum_{k=1}^KL_{ki}^*\overline{v_k}\bigg)
+B_1^{-1}\overline{v_1}\\
&\!\!\!\;\vdots\\[-3mm]
0&\!\!\!\in-\Sum_{i=1}^mL_{Ki}\big(A_i+C_i\big)^{-1}
\bigg(z_i-\Sum_{k=1}^KL_{ki}^*\overline{v_k}\bigg)
+B_K^{-1}\overline{v_K}.
\end{cases}
\end{multline}
\end{problem}

The following result is a special case of 
\cite[Theorem~2.4(iii)]{Siop13}. We use the notation
\eqref{e:jeddah2013-05-23a} to model the possibility of inexactly 
evaluating the operators involved.

\begin{theorem}
\label{t:10}
Consider the setting of Problem~\ref{prob:10}.
Let $x_{1,0}\in\HH_1$, \ldots, $x_{m,0}\in\HH_m$, 
$v_{1,0}\in\GG_1$, \ldots, $v_{K,0}\in\GG_K$, let
$\varepsilon\in\left]0,1/(\beta+1)\right[$, 
let $(\gamma_n)_{n\in\NN}$ be a sequence in 
$[\varepsilon,(1-\varepsilon)/\beta]$, and set
\begin{equation}
\label{e:5h8Njiq-11b}
\begin{array}{l}
\text{For}\;n=0,1,\ldots\\
\left\lfloor
\begin{array}{l}
\text{For}\;i=1,\ldots,m\\
\left\lfloor
\begin{array}{l}
s_{1,i,n}\approx x_{i,n}-\gamma_n\big(C_ix_{i,n}+
\sum_{k=1}^KL_{ki}^*v_{k,n}\big)\\[1mm]
p_{1,i,n}\approx J_{\gamma_n A_i}(s_{1,i,n}+\gamma_nz_i)\\[1mm]
\end{array}
\right.\\[1mm]
\text{For}\;k=1,\ldots,K\\
\left\lfloor
\begin{array}{l}
s_{2,k,n}\approx v_{k,n}+\gamma_n
\sum_{i=1}^mL_{ki}x_{i,n}\\[2mm]
p_{2,k,n}\approx s_{2,k,n}-\gamma_n J_{\gamma_n^{-1}B_k}
(\gamma_n^{-1}s_{2,k,n})\\[2mm]
q_{2,k,n}\approx p_{2,k,n}+\gamma_n\
\sum_{i=1}^mL_{ki}p_{1,i,n}\\
v_{k,n+1}=v_{k,n}-s_{2,k,n}+q_{2,k,n}
\end{array}
\right.\\[1mm]
\text{For}\;i=1,\ldots,m\\
\left\lfloor
\begin{array}{l}
q_{1,i,n}\approx p_{1,i,n}-\gamma_n\big(C_ip_{1,i,n}+
\sum_{k=1}^KL_{ki}^*p_{2,k,n}\big)\\
x_{i,n+1}=x_{i,n}-s_{1,i,n}+q_{1,i,n}.
\end{array}
\right.\\
\end{array}
\right.\\
\end{array}
\end{equation}
Then there exist a solution 
$(\overline{x_1},\ldots,\overline{x_m})$ 
to \eqref{e:IJ834hj8fr-24p} and a solution 
$(\overline{v_1},\ldots,\overline{v_K})$ to \eqref{e:IJ834hj8fr-24d}
such that the following hold.
\begin{enumerate}
\item
\label{t:10iia}
$(\forall i\in\{1,\ldots,m\})$ 
$z_i-\sum_{k=1}^KL_{ki}^*\overline{v_k}\in
A_i\overline{x_i}+C_i\overline{x_i}$.
\item
\label{t:10iib}
$(\forall k\in\{1,\ldots,K\})$
$\sum_{i=1}^mL_{ki}\overline{x_i}\in B_k^{-1}\overline{v_k}$.
\item
\label{t:10iic}
$(\forall i\in\{1,\ldots,m\})$ $x_{i,n}\weakly\overline{x_i}$.
\item
\label{t:10iid}
$(\forall k\in\{1,\ldots,K\})$ $v_{k,n}\weakly\overline{v_k}$.
\item 
\label{t:10iie}
Suppose that $A_1$ or $C_1$ is 
uniformly monotone at $\overline{x_1}$. Then 
$x_{1,n}\to\overline{x_1}$.
\item 
\label{t:10iif}
Suppose that, for some $k\in\{1,\ldots,K\}$, $B_k^{-1}$ is 
uniformly monotone at $\overline{v_k}$. Then
$v_{k,n}\to\overline{v_k}$.
\end{enumerate}
\end{theorem}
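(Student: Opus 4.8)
The plan is to recast Problem~\ref{prob:10} and the iteration \eqref{e:5h8Njiq-11b} as a single primal-dual monotone inclusion and its error-tolerant forward-backward-forward iteration in a product Hilbert space, which exhibits them as the particular instance of \cite[Theorem~2.4(iii)]{Siop13} to be invoked. I work in $\KKK=\HH_1\oplus\cdots\oplus\HH_m\oplus\GG_1\oplus\cdots\oplus\GG_K$, write a generic point as $\boldsymbol{w}=(\boldsymbol{x},\boldsymbol{v})$ with $\boldsymbol{x}=(x_i)_{1\le i\le m}$ and $\boldsymbol{v}=(v_k)_{1\le k\le K}$, and set $\boldsymbol{z}=(z_1,\ldots,z_m,0,\ldots,0)$. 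On $\KKK$ I define $\boldsymbol{A}\colon\boldsymbol{w}\mapsto(A_1x_1\times\cdots\times A_mx_m)\times(B_1^{-1}v_1\times\cdots\times B_K^{-1}v_K)$, $\boldsymbol{C}\colon\boldsymbol{w}\mapsto(C_1x_1,\ldots,C_mx_m,0,\ldots,0)$, and the bounded linear operator $\boldsymbol{S}\colon\boldsymbol{w}\mapsto\big((\sum_{k=1}^KL_{ki}^*v_k)_{1\le i\le m},(-\sum_{i=1}^mL_{ki}x_i)_{1\le k\le K}\big)$. The structural checks are routine: $\boldsymbol{A}$ is maximally monotone as a direct sum of maximally monotone operators (each $B_k^{-1}$ being maximally monotone); $\boldsymbol{S}$ is skew, hence monotone with $\scal{\boldsymbol{w}}{\boldsymbol{S}\boldsymbol{w}}=0$; $\boldsymbol{C}$ is monotone and $(\max_{1\le i\le m}\mu_i)$-Lipschitzian; and $\boldsymbol{C}+\boldsymbol{S}$ is therefore monotone and $\beta$-Lipschitzian with $\beta$ as in \eqref{e:5h8Njiq-11a}, because $\boldsymbol{S}$ has the skew two-by-two block structure built from $L\colon\boldsymbol{x}\mapsto(\sum_iL_{ki}x_i)_k$, so that $\|\boldsymbol{S}\|=\|L\|\le\sqrt{\lambda}$ and $\|\boldsymbol{C}+\boldsymbol{S}\|\le\max_i\mu_i+\sqrt{\lambda}=\beta$. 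Reading $\boldsymbol{z}\in\boldsymbol{A}\boldsymbol{w}+\boldsymbol{C}\boldsymbol{w}+\boldsymbol{S}\boldsymbol{w}$ coordinatewise shows that its solutions are exactly the pairs satisfying \ref{t:10iia} and \ref{t:10iib}; substituting \ref{t:10iib} into \ref{t:10iia} and inverting recovers \eqref{e:IJ834hj8fr-24d}, so this is precisely the primal-dual problem of Problem~\ref{prob:10}, and its solvability follows from the standing assumption exactly as in \cite{Siop13}.

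I then match \eqref{e:5h8Njiq-11b} with the iteration $\boldsymbol{s}_n\approx\boldsymbol{w}_n-\gamma_n(\boldsymbol{C}+\boldsymbol{S})\boldsymbol{w}_n$, $\boldsymbol{p}_n\approx J_{\gamma_n\boldsymbol{A}}(\boldsymbol{s}_n+\gamma_n\boldsymbol{z})$, $\boldsymbol{q}_n\approx\boldsymbol{p}_n-\gamma_n(\boldsymbol{C}+\boldsymbol{S})\boldsymbol{p}_n$, $\boldsymbol{w}_{n+1}=\boldsymbol{w}_n-\boldsymbol{s}_n+\boldsymbol{q}_n$ of \cite{Siop13}. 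Since $\boldsymbol{A}$ is block-diagonal, $J_{\gamma_n\boldsymbol{A}}$ splits into the $J_{\gamma_nA_i}$ on the $\HH_i$-blocks and the $J_{\gamma_nB_k^{-1}}$ on the $\GG_k$-blocks, and the inverse-resolvent identity $J_{\gamma B_k^{-1}}=\Id-\gamma J_{\gamma^{-1}B_k}\circ(\gamma^{-1}\Id)$ turns the $\GG_k$-block of $\boldsymbol{p}_n\approx J_{\gamma_n\boldsymbol{A}}(\boldsymbol{s}_n+\gamma_n\boldsymbol{z})$ into exactly the line defining $p_{2,k,n}$ (the $\GG_k$-block of $\boldsymbol{z}$ vanishing). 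The other lines of \eqref{e:5h8Njiq-11b} are the $\HH_i$- and $\GG_k$-components of the two forward steps and of the update, once one notes that the $\HH_i$-coordinate of $(\boldsymbol{C}+\boldsymbol{S})\boldsymbol{w}$ is $C_ix_i+\sum_kL_{ki}^*v_k$ and its $\GG_k$-coordinate is $-\sum_iL_{ki}x_i$. So \eqref{e:5h8Njiq-11b} is the error-tolerant forward-backward-forward iteration of \cite{Siop13} for $\boldsymbol{z}\in\boldsymbol{A}\boldsymbol{w}+(\boldsymbol{C}+\boldsymbol{S})\boldsymbol{w}$ in $\KKK$; the stepsize interval $[\varepsilon,(1-\varepsilon)/\beta]$ and the range of $\varepsilon$ are the admissible ones there, and the perturbations absorbed in the symbol $\approx$ are summable by \eqref{e:jeddah2013-05-23a} — after passing the error in $\boldsymbol{p}_n$ through the $\beta$-Lipschitzian map $\boldsymbol{C}+\boldsymbol{S}$ one still obtains a summable aggregate error in the abstract iteration, as its error model requires. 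Invoking \cite[Theorem~2.4(iii)]{Siop13} yields a solution $\overline{\boldsymbol{w}}=(\overline{\boldsymbol{x}},\overline{\boldsymbol{v}})$ with $x_{i,n}\weakly\overline{x_i}$ and $v_{k,n}\weakly\overline{v_k}$, and \ref{t:10iia}--\ref{t:10iib} are read off from the coordinates of $\boldsymbol{z}\in\boldsymbol{A}\overline{\boldsymbol{w}}+(\boldsymbol{C}+\boldsymbol{S})\overline{\boldsymbol{w}}$, proving \ref{t:10iia}--\ref{t:10iid}. For \ref{t:10iie}--\ref{t:10iif}, uniform monotonicity of $A_1$ or $C_1$ at $\overline{x_1}$ makes the first $\HH_1$-block of $\boldsymbol{A}$ or $\boldsymbol{C}$ uniformly monotone at $\overline{\boldsymbol{w}}$, and similarly for $B_k^{-1}$ at $\overline{v_k}$, so the strong-convergence clauses of \cite[Theorem~2.4(iii)]{Siop13} apply.

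The proof carries essentially no new analytic content; the main obstacle is the bookkeeping of the reduction. Concretely, one must (a) verify line by line that \eqref{e:5h8Njiq-11b} is the coordinate transcription of the abstract iteration, the only nonobvious step being the inverse-resolvent substitution producing the $p_{2,k,n}$ line; (b) confirm $\|\boldsymbol{S}\|\le\sqrt{\lambda}$, which is immediate from $\|\boldsymbol{S}\|=\|L\|$ and the bound on $\|L\|^2$ built into \eqref{e:5h8Njiq-11a}; and (c) check that the nested inexactness of \eqref{e:5h8Njiq-11b}, where errors in $\boldsymbol{p}_n$ feed the computation of $\boldsymbol{q}_n$, still collapses to the single summable-error sequence used in \cite[Theorem~2.4(iii)]{Siop13} — this is the one place calling for an argument rather than a computation, and it is settled by the Lipschitz continuity of $\boldsymbol{C}+\boldsymbol{S}$ together with \eqref{e:jeddah2013-05-23a}.
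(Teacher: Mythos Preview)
Your proposal is correct, and in fact it supplies considerably more detail than the paper does: the paper gives no proof of this theorem at all, stating only that it ``is a special case of \cite[Theorem~2.4(iii)]{Siop13}.'' The reduction you carry out --- packing the primal and dual variables into a single product space, writing the coupled system as $\boldsymbol{z}\in\boldsymbol{A}\boldsymbol{w}+(\boldsymbol{C}+\boldsymbol{S})\boldsymbol{w}$ with $\boldsymbol{A}$ maximally monotone, $\boldsymbol{S}$ skew, and $\boldsymbol{C}+\boldsymbol{S}$ Lipschitzian, and recognizing \eqref{e:5h8Njiq-11b} as the componentwise transcription of Tseng's error-tolerant forward-backward-forward iteration via the inverse-resolvent identity --- is exactly the machinery underlying the cited theorem in \cite{Siop13}. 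So rather than invoking \cite[Theorem~2.4(iii)]{Siop13} as a black box (which is what the paper does, since Problem~\ref{prob:10} matches the framework there verbatim), you have effectively reconstructed its proof; the analytic content and the bookkeeping you describe in points (a)--(c) are sound.
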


\section{Main algorithm}
\label{sec:3}

We start with the following facts.

\begin{proposition}
\label{p:2013-05-16}
Let $\HH$ be a real Hilbert space, let $r$ be a strictly positive
integer, let $z\in\HH$, and let $A\colon\HH\to 2^{\HH}$ and
$C\colon\HH\to\HH$.
For every integer $k\in\{1,\ldots,r\}$, let $\GG_k$ and $\KK_k$
be real Hilbert spaces, let 
$B_k\colon\GG_k\to 2^{\GG_k}$ and $D_k\colon\KK_k\to 2^{\KK_k}$,
and let $L_k\in\BL(\HH,\GG_k)$ and $M_k\in\BL(\HH,\KK_k)$. Set
\begin{equation}
\label{e:2013-05-16e}
\HHH=\bigoplus_{k=1}^r\HH,\quad\GGG=\bigoplus_{k=1}^r\GG_k,
\quad\KKK=\bigoplus_{k=1}^r\KK_k, 
\end{equation}
and
\begin{equation}
\label{e:2013-05-11e}
\begin{cases}
\boldsymbol{A}\colon\HHH\to 2^{\HHH}\colon
(x,y_1,\ldots,y_r)\mapsto
(Ax+Cx-z)\times\{0\}\times\cdots\times\{0\}\\
\boldsymbol{B}\colon\GGG\oplus\KKK\to 2^{\GGG\oplus\KKK}
\colon(s_1,\ldots,s_r,t_1,\ldots,t_r)\mapsto
B_1s_1\times\cdots\times B_rs_r\times D_1t_1\times\cdots
\times D_rt_r\\
\boldsymbol{L}\colon\HHH\to\GGG\oplus\KKK\colon(x,y_1,\ldots,y_r)
\mapsto(L_1x-L_1y_1,\ldots,L_rx-L_ry_r,M_1y_1,\ldots,M_ry_r).
\end{cases}
\end{equation}
Furthermore, suppose that 
\begin{equation}
\label{e:jeddah2013-05-16b}
\big(\exi\overline{\boldsymbol{x}}=(\overline{x},
\overline{y_1},\ldots,\overline{y_r})\in\HHH\big)\quad
\boldsymbol{0}\in\boldsymbol{A}\overline{\boldsymbol{x}}
+\boldsymbol{L}^*\big(\boldsymbol{B}(\boldsymbol{L}
\overline{\boldsymbol{x}})\big).
\end{equation}
Then the following hold for some 
$(\overline{v_1},\ldots,\overline{v_r})\in\GGG$ 
and $(\overline{w_1},\ldots,\overline{w_r})\in\KKK$.
\begin{enumerate}
\item
\label{p:2013-05-16iii}
$z-\sum_{k=1}^rL_{k}^*\overline{v_k}\in
A\overline{x}+C\overline{x}$.
\item
\label{p:2013-05-16iv}
$(\forall k\in\{1,\ldots,r\})$
$L^*_k\overline{v_k}=M_k^*\overline{w_k}$,\;
$L_k\overline{x}-L_k\overline{y}_k\in B_k^{-1}\overline{v_k}$,\;
and\;\,$M_k\overline{y}_k\in D_k^{-1}\overline{w_k}$.
\item
\label{p:2013-05-16i}
$\overline{x}$ solves \eqref{e:2013-04-30p}.
\item
\label{p:2013-05-16ii}
$(\overline{v_1},\ldots,\overline{v_r})$ solves
\eqref{e:2013-04-30d}.
\end{enumerate}
\end{proposition}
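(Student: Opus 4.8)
The plan is to unwrap the single inclusion \eqref{e:jeddah2013-05-16b} coordinate by coordinate. First I would record the adjoint of $\boldsymbol{L}$: a direct pairing computation from \eqref{e:2013-05-11e} gives, for every $(v_1,\ldots,v_r,w_1,\ldots,w_r)\in\GGG\oplus\KKK$,
\[
\boldsymbol{L}^*(v_1,\ldots,v_r,w_1,\ldots,w_r)
=\Big(\textstyle\sum_{k=1}^rL_k^*v_k,\;-L_1^*v_1+M_1^*w_1,\;\ldots,\;-L_r^*v_r+M_r^*w_r\Big).
\]
Since \eqref{e:jeddah2013-05-16b} holds, there is a point $(\overline{v_1},\ldots,\overline{v_r},\overline{w_1},\ldots,\overline{w_r})\in\boldsymbol{B}(\boldsymbol{L}\,\overline{\boldsymbol{x}})$, i.e.\ $\overline{v_k}\in B_k(L_k\overline{x}-L_k\overline{y_k})$ and $\overline{w_k}\in D_k(M_k\overline{y_k})$ for every $k\in\{1,\ldots,r\}$, such that $\boldsymbol{0}\in\boldsymbol{A}\,\overline{\boldsymbol{x}}+\boldsymbol{L}^*(\overline{v_1},\ldots,\overline{w_r})$. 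Reading off the coordinate of index $x$ of this inclusion, using the form of $\boldsymbol{A}$ in \eqref{e:2013-05-11e}, yields $z-\sum_kL_k^*\overline{v_k}\in A\overline{x}+C\overline{x}$, which is \ref{p:2013-05-16iii}; reading off the coordinate of index $y_k$ yields $L_k^*\overline{v_k}=M_k^*\overline{w_k}$. Inverting the two membership relations defining $\overline{v_k}$ and $\overline{w_k}$ gives $L_k\overline{x}-L_k\overline{y_k}\in B_k^{-1}\overline{v_k}$ and $M_k\overline{y_k}\in D_k^{-1}\overline{w_k}$, completing \ref{p:2013-05-16iv}.

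For \ref{p:2013-05-16i} I would use the elementary identity, valid for any set-valued $P,Q$ on a Hilbert space, that $u\in(P\infconv Q)x\Leftrightarrow x\in P^{-1}u+Q^{-1}u$ (immediate from \eqref{e:parsumdef}). Apply it with $P=L_k^*\circ B_k\circ L_k$, $Q=M_k^*\circ D_k\circ M_k$, and $u=L_k^*\overline{v_k}=M_k^*\overline{w_k}$: by \ref{p:2013-05-16iv}, $\overline{v_k}\in B_k(L_k(\overline{x}-\overline{y_k}))$ gives $\overline{x}-\overline{y_k}\in P^{-1}u$, and $\overline{w_k}\in D_k(M_k\overline{y_k})$ together with $u=M_k^*\overline{w_k}$ gives $\overline{y_k}\in Q^{-1}u$; hence $\overline{x}=(\overline{x}-\overline{y_k})+\overline{y_k}\in P^{-1}u+Q^{-1}u$, i.e.\ $L_k^*\overline{v_k}\in\big((L_k^*\circ B_k\circ L_k)\infconv(M_k^*\circ D_k\circ M_k)\big)\overline{x}$. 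Summing over $k$ and adding \ref{p:2013-05-16iii} produces \eqref{e:2013-04-30p}.

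For \ref{p:2013-05-16ii} I would unwind the parallel composition the same way: by Definition~\ref{d:2011-01-05}, $M_k^*\pushfwd D_k^{-1}=(M_k^*\circ D_k\circ M_k)^{-1}$, so $\overline{y_k}\in(M_k^*\pushfwd D_k^{-1})(L_k^*\overline{v_k})$ is equivalent to $L_k^*\overline{v_k}\in M_k^*\big(D_k(M_k\overline{y_k})\big)$, which holds by \ref{p:2013-05-16iv} since $L_k^*\overline{v_k}=M_k^*\overline{w_k}$ and $\overline{w_k}\in D_k(M_k\overline{y_k})$. From \ref{p:2013-05-16iii}, $\overline{x}\in(A+C)^{-1}(z-\sum_lL_l^*\overline{v_l})$, so $L_k\overline{x}$ lies in $L_k$ applied to that set; from \ref{p:2013-05-16iv}, $L_k\overline{x}-L_k\overline{y_k}\in B_k^{-1}\overline{v_k}$; and from the previous sentence, $L_k\overline{y_k}\in L_k\big((M_k^*\pushfwd D_k^{-1})(L_k^*\overline{v_k})\big)$. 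Adding the three elements $-L_k\overline{x}$, $L_k\overline{x}-L_k\overline{y_k}$, and $L_k\overline{y_k}$ gives $0$, which is exactly the $k$-th inclusion of \eqref{e:2013-04-30d}.

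The proof carries essentially no analytic content: the existence hypothesis \eqref{e:jeddah2013-05-16b} hands us all the witnesses, and everything else is coordinate bookkeeping plus the two formal identities $u\in(P\infconv Q)x\Leftrightarrow x\in P^{-1}u+Q^{-1}u$ and $(M_k^*\pushfwd D_k^{-1})^{-1}=M_k^*\circ D_k\circ M_k$. The one place to be careful is the computation of $\boldsymbol{L}^*$ and the matching of each coordinate of the product inclusion with the right conclusion; a sign slip in the $-L_k^*v_k+M_k^*w_k$ terms would corrupt \ref{p:2013-05-16iv}, and hence everything downstream.
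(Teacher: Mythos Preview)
Your proposal is correct and follows essentially the same route as the paper: compute $\boldsymbol{L}^*$, split the product inclusion into its coordinates to obtain \ref{p:2013-05-16iii}--\ref{p:2013-05-16iv}, then feed those into the identities $(P\infconv Q)^{-1}=P^{-1}+Q^{-1}$ and $(M_k^*\pushfwd D_k^{-1})^{-1}=M_k^*\circ D_k\circ M_k$ to recover \eqref{e:2013-04-30p} and \eqref{e:2013-04-30d}. The only cosmetic difference is that you name these two identities up front, whereas the paper unwinds them inline as a chain of implications; the logic and the witnesses are identical.
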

\begin{proof}
\ref{p:2013-05-16iii} and \ref{p:2013-05-16iv}:
It follows from \eqref{e:jeddah2013-05-16b} that there exists
$\overline{\boldsymbol{v}}=(\overline{v_1},\ldots,\overline{v_r},
\overline{w_1},\ldots,\overline{w_r})\in\GGG\oplus\KKK$ such that
$-\boldsymbol{L}^*\overline{\boldsymbol{v}}\in\boldsymbol{A}
\overline{\boldsymbol{x}}$ and $\overline{\boldsymbol{v}}\in
\boldsymbol{B}(\boldsymbol{L}\overline{\boldsymbol{x}})$, i.e., 
\begin{equation}
\label{e:jeddah2013-05-16v}
-\boldsymbol{L}^*\overline{\boldsymbol{v}}\in\boldsymbol{A}
\overline{\boldsymbol{x}}
\quad\text{and}\quad 
\boldsymbol{L}\overline{\boldsymbol{x}}\in\boldsymbol{B}^{-1}
\overline{\boldsymbol{v}}.
\end{equation}
Since
\begin{equation}
\label{e:jeddah2013-05-16a}
\boldsymbol{L}^*\colon\GGG\oplus\KKK\to\HHH\colon
(v_1,\ldots,v_r,w_1,\ldots,w_r)
\mapsto\bigg(\sum_{k=1}^rL_k^*v_k,M_1^*w_1-L_1^*v_1,\ldots,
M_r^*w_r-L_r^*v_r\bigg),
\end{equation}
it follows from \eqref{e:2013-05-11e} that 
\eqref{e:jeddah2013-05-16v} can be rewritten as
\begin{equation}
\label{e:jeddah2013-05-16g}
z-\sum_{k=1}^rL_{k}^*\overline{v_k}\in A\overline{x}+C\overline{x}
\quad\text{and}\quad
(\forall k\in\{1,\ldots,r\})\quad
\begin{cases}
L^*_k\overline{v_k}=M_k^*\overline{w_k}\\
L_k\overline{x}-L_k\overline{y}_k\in B_k^{-1}\overline{v_k}\\
M_k\overline{y}_k\in D_k^{-1}\overline{w_k}.
\end{cases}
\end{equation}

\ref{p:2013-05-16i}:
For every $k\in\{1,\ldots,r\}$,
\begin{align}
\text{\ref{p:2013-05-16iv}}
&\Rightarrow
\begin{cases}
L^*_k\overline{v_k}=M_k^*\overline{w_k}\\
\overline{v_k}\in B_k(L_k\overline{x}-L_k\overline{y}_k)\\
\overline{w_k}\in D_k(M_k\overline{y}_k)
\end{cases}
\label{e:2013-05-05R}\\
&\Rightarrow
\begin{cases}
L^*_k\overline{v_k}=M_k^*\overline{w_k}\\
L_k^*\overline{v_k}\in L_k^*
\big(B_k(L_k\overline{x}-L_k\overline{y}_k)\big)\\
M_k^*\overline{w_k}\in M_k^*\big(D_k(M_k\overline{y}_k)\big)
\end{cases}
\nonumber\\
&\Leftrightarrow
\begin{cases}
L^*_k\overline{v_k}=M_k^*\overline{w_k}\\
\overline{x}-\overline{y}_k\in
(L_k^*\circ B_k\circ L_k)^{-1}(L_k^*\overline{v_k})\\
\overline{y}_k\in
(M_k^*\circ D_k\circ M_k)^{-1}(M_k^*\overline{w_k})
\end{cases}
\label{e:2013-05-05S}\\
&\Rightarrow
\overline{x}\in
(L_k^*\circ B_k\circ L_k)^{-1}(L_k^*\overline{v_k})+
(M_k^*\circ D_k\circ M_k)^{-1}(L_k^*\overline{v_k})
\nonumber\\
&\Leftrightarrow
L_k^*\overline{v_k}\in
\big((L_k^*\circ B_k\circ L_k)\infconv 
(M_k^*\circ D_k\circ M_k)\big)(\overline{x}).
\label{e:jeddah2013-05-16m}
\end{align}
Hence, 
\begin{equation}
\label{e:2013-05-05D}
\sum_{k=1}^rL_k^*\overline{v_k}\in\sum_{k=1}^r
\big((L_k^*\circ B_k\circ L_k)\infconv 
(M_k^*\circ D_k\circ M_k)\big)(\overline{x}).
\end{equation}
Adding this inclusion to that of \ref{p:2013-05-16iii} shows that
$\overline{x}$ solves \eqref{e:2013-04-30p}.

\ref{p:2013-05-16ii}:
It follows from \ref{p:2013-05-16iii} that
\begin{equation}
\label{e:2013-05-06A}
(\forall k\in\{1,\ldots,r\})\quad {-L_k}\overline{x}\in
-L_k\bigg((A+C)^{-1}\bigg(z-\sum_{l=1}^rL_{l}^*
\overline{v_l}\bigg)\bigg).
\end{equation}
On the other hand, \ref{p:2013-05-16iv} yields
\begin{align}
\label{e:2013-05-06B}
(\forall k\in\{1,\ldots,r\})\quad 
L_k\overline{x}-L_k\overline{y_k}\in 
B_k^{-1}\overline{v_k},
\end{align}
while \eqref{e:2013-05-05S} yields
\begin{align}
\label{e:2013-05-06C}
(\forall k\in\{1,\ldots,r\})\quad 
L_k\overline{y_k}
&\in L_k\big((M_k^*\circ D_k\circ M_k)^{-1}
(M_k^*\overline{w_k})\big)\nonumber\\
&=L_k\big((M_k^*\circ D_k\circ M_k)^{-1}(L_k^*\overline{v_k})\big)
\nonumber\\
&=L_k\big((M_k^*\pushfwd D_k^{-1})(L_k^*\overline{v_k})\big).
\end{align}
Upon adding \eqref{e:2013-05-06A}, \eqref{e:2013-05-06B}, and 
\eqref{e:2013-05-06C}, we obtain
\begin{multline}
\label{e:2013-05-06D}
(\forall k\in\{1,\ldots,r\})\quad 
0\in-L_k\bigg((A+C)^{-1}\bigg(z-\sum_{l=1}^rL_{l}^*
\overline{v_l}\bigg)\bigg)\\
+B_k^{-1}\overline{v_k}
+L_k\big((M_k^*\pushfwd D_k^{-1})(L_k^*\overline{v_k})\big),
\end{multline}
which proves that
$(\overline{v_1},\ldots,\overline{v_r})$ solves
\eqref{e:2013-04-30d}.
\end{proof}

We are now in a position to present our main result.

\begin{theorem}
\label{t:1}
Consider the setting of Problem~\ref{prob:1}. Let $x_{0}\in\HH$, 
$y_{1,0}\in\HH$, \ldots, $y_{r,0}\in\HH$, $v_{1,0}\in\GG_1$, 
\ldots, $v_{r,0}\in\GG_r$, $w_{1,0}\in\KK_1$, \ldots, 
$w_{r,0}\in\KK_r$, 
let $\varepsilon\in\left]0,1/(\beta+1)\right[$,
let $(\gamma_n)_{n\in\NN}$ be a sequence in 
$[\varepsilon,(1-\varepsilon)/\beta]$, and set
\begin{equation}
\label{e:2013-05-06x}
\begin{array}{l}
\text{For}\;n=0,1,\ldots\\
\left\lfloor
\begin{array}{l}
s_{1,1,n}\approx x_{n}-\gamma_n(Cx_{n}+
\sum_{k=1}^rL_{k}^*v_{k,n})\\[1mm]
p_{1,1,n}\approx J_{\gamma_n A}(s_{1,1,n}+\gamma_nz)\\[2mm]
\text{For}\;k=1,\ldots,r\\
\left\lfloor
\begin{array}{l}
p_{1,k+1,n}\approx y_{k,n}+\gamma_n(L_{k}^*v_{k,n}-
M_{k}^*w_{k,n})\\[1mm]
s_{2,k,n}\approx v_{k,n}+\gamma_nL_k(x_n-y_{k,n})\\[2mm]
p_{2,k,n}\approx s_{2,k,n}-\gamma_n J_{\gamma_n^{-1}B_k}
(\gamma_n^{-1}s_{2,k,n})\\[2mm]
q_{2,k,n}\approx p_{2,k,n}+
\gamma_nL_k(p_{1,1,n}-p_{1,k+1,n})\\[2mm]
v_{k,n+1}=v_{k,n}-s_{2,k,n}+q_{2,k,n}\\[2mm]
s_{2,k+r,n}\approx w_{k,n}+\gamma_nM_ky_{k,n}\\[2mm]
p_{2,k+r,n}\approx s_{2,k+r,n}-\gamma_n\big(J_{\gamma_n^{-1}D_k}
(\gamma_n^{-1}s_{2,k+r,n})\big)\\[2mm]
q_{1,k+1,n}\approx p_{1,k+1,n}+\gamma_n
(L_k^*p_{2,k,n}-M_k^*p_{2,k+r,n})\\[2mm]
q_{2,k+r,n}\approx p_{2,k+r,n}+\gamma_nM_{k}p_{1,k+1,n}\\[1mm]
w_{k,n+1}=w_{k,n}-s_{2,k+r,n}+q_{2,k+r,n}\\
\end{array}
\right.\\[2mm]
q_{1,1,n}\approx p_{1,1,n}-\gamma_n(Cp_{1,1,n}+
\sum_{k=1}^rL_k^*p_{2,k,n})\\
x_{n+1}=x_{n}-s_{1,1,n}+q_{1,1,n}\\[1mm]
\text{For}\;k=1,\ldots,r\\
\left\lfloor
\begin{array}{l}
y_{k,n+1}=y_{k,n}-p_{1,k+1,n}+q_{1,k+1,n}.
\end{array}
\right.\\
\end{array}
\right.\\
\end{array}
\end{equation}
Then the following hold for some solution $\overline{x}$ to 
\eqref{e:2013-04-30p} and some solution 
$(\overline{v_1},\ldots,\overline{v_r})$ to \eqref{e:2013-04-30d}.
\begin{enumerate}
\item
\label{t:1i}
$x_n\weakly\overline{x}~$ and 
$~(\forall k\in\{1,\ldots,r\})$ $v_{k,n}\weakly\overline{v_k}$.
\item
\label{t:1ii}
Suppose that $A$ or $C$ is uniformly monotone at 
$\overline{x}$. Then $x_{n}\to\overline{x}$.
\item
\label{t:1iii}
Suppose that, for some $k\in\{1,\ldots,r\}$, $B^{-1}_k$ 
is uniformly monotone at $\overline{v_k}$. Then
$v_{k,n}\to\overline{v_k}$.
\end{enumerate}
\end{theorem}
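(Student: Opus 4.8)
The plan is to derive Theorem~\ref{t:1} from Theorem~\ref{t:10} by recognizing that the iteration \eqref{e:2013-05-06x} is precisely the iteration \eqref{e:5h8Njiq-11b} applied to the reformulated inclusion $\boldsymbol{0}\in\boldsymbol{A}\overline{\boldsymbol{x}}+\boldsymbol{L}^*(\boldsymbol{B}(\boldsymbol{L}\overline{\boldsymbol{x}}))$ introduced in Proposition~\ref{p:2013-05-16}. Concretely, I would take $m=r+1$, set $\HH_1=\HH$ and $\HH_{k+1}=\HH$ for $k\in\{1,\ldots,r\}$, take $K=2r$ with $\GG_1,\ldots,\GG_r$ as given and $\GG_{r+k}=\KK_k$, and read off the operators $A_i$, $C_i$, $z_i$, $L_{ki}$ from the block structure of $\boldsymbol{A}$, $\boldsymbol{L}$, $\boldsymbol{B}$ in \eqref{e:2013-05-11e} and \eqref{e:jeddah2013-05-16a}. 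Here $\boldsymbol{A}$ supplies $A_1=A$, $C_1=C$, $z_1=z$ and $A_i=C_i=0$, $z_i=0$ for $i\geq2$; $\boldsymbol{B}$ supplies $B_k$ (for $k\leq r$) and $B_{r+k}=D_k$; and the components of $\boldsymbol{L}$ give $L_{k1}=L_k$, $L_{k,k+1}=-L_k$, $L_{r+k,k+1}=M_k$ with all other $L_{ki}=0$.

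First I would verify that with these identifications the constant $\beta$ of \eqref{e:5h8Njiq-11a} matches the constant $\beta$ of \eqref{e:2013-04-30b}: since all $\mu_i$ vanish except $\mu_1=\mu$, we get $\max_i\mu_i=\mu$, and the bound $\lambda$ on $\|\boldsymbol{L}\|^2$ must be shown to equal $\sum_{k}\|L_k\|^2+\max_k(\|L_k\|^2+\|M_k\|^2)$. This is the one genuine computation: for $(x,y_1,\ldots,y_r)$ one has $\|\boldsymbol{L}(x,y_1,\ldots,y_r)\|^2=\sum_k\|L_kx-L_ky_k\|^2+\sum_k\|M_ky_k\|^2$, and a triangle-inequality / Cauchy--Schwarz estimate splitting the cross terms, together with the choice to bound $\|L_kx\|^2$ by $\|L_k\|^2\|x\|^2$ and to bound the $y_k$-dependent part by $\max_k(\|L_k\|^2+\|M_k\|^2)\sum_k\|y_k\|^2$, yields exactly the stated expression as an admissible value of $\lambda$; I would record this as a short lemma-style estimate. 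Then I would check that the hypothesis \eqref{e:jeddah2013-05-16b} holds: this is exactly the assumption that \eqref{e:2013-04-30p} has a solution, via the equivalence embedded in Proposition~\ref{p:2013-05-16} (a solution $\overline{x}$ together with the dual variables $\overline{v_k},\overline{w_k}$ and the $\overline{y_k}$ produced there gives a point satisfying \eqref{e:jeddah2013-05-16b}).

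Next, the bookkeeping step: I would substitute the block operators into \eqref{e:5h8Njiq-11b} and check line by line that it reduces to \eqref{e:2013-05-06x}. The resolvent $J_{\gamma_n\boldsymbol{A}}$ acts as $J_{\gamma_n A}(\cdot+\gamma_n z)$ in the first block and as the identity in blocks $2,\ldots,r+1$ (since $A_i=0$ there), which is why for $i=k+1$ the update has the affine form $p_{1,k+1,n}\approx y_{k,n}+\gamma_n(L_k^*v_{k,n}-M_k^*w_{k,n})$ with no resolvent; the $C_i$ terms vanish for $i\geq2$; the sums $\sum_iL_{ki}x_i$ collapse to $L_k(x_n-y_{k,n})$ for $k\leq r$ and to $M_ky_{k,n}$ for the $D_k$-blocks; and the adjoint sums $\sum_kL_{ki}^*v_k$ collapse correspondingly. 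The error-tolerance relation $\approx$ of \eqref{e:jeddah2013-05-23a} is preserved under these finite linear combinations, so the inexact scheme transfers verbatim. Having matched the iterations, I would invoke Theorem~\ref{t:10}\ref{t:10iic}--\ref{t:10iid} to get $x_n\weakly\overline{x}$, $y_{k,n}\weakly\overline{y_k}$, $v_{k,n}\weakly\overline{v_k}$, $w_{k,n}\weakly\overline{w_k}$ with $(\overline{x},\overline{y_1},\ldots,\overline{y_r})$ and $(\overline{v_1},\ldots,\overline{v_r},\overline{w_1},\ldots,\overline{w_r})$ solving \eqref{e:IJ834hj8fr-24p}--\eqref{e:IJ834hj8fr-24d}; then Theorem~\ref{t:10}\ref{t:10iia}--\ref{t:10iib} say exactly that $\overline{\boldsymbol{x}}$ satisfies the primal--dual conditions \ref{p:2013-05-16iii}--\ref{p:2013-05-16iv} of Proposition~\ref{p:2013-05-16}, so parts~\ref{p:2013-05-16i}--\ref{p:2013-05-16ii} of that proposition give that $\overline{x}$ solves \eqref{e:2013-04-30p} and $(\overline{v_1},\ldots,\overline{v_r})$ solves \eqref{e:2013-04-30d}, proving~\ref{t:1i}.

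For \ref{t:1ii} and \ref{t:1iii} I would simply transport the strong-convergence criteria: uniform monotonicity of $A$ or $C$ at $\overline{x}$ is uniform monotonicity of $A_1$ or $C_1$ at $\overline{x_1}$, so Theorem~\ref{t:10}\ref{t:10iie} gives $x_n\to\overline{x}$; and uniform monotonicity of $B_k^{-1}$ at $\overline{v_k}$ for some $k\leq r$ is the hypothesis of Theorem~\ref{t:10}\ref{t:10iif} for that same index, giving $v_{k,n}\to\overline{v_k}$. The main obstacle is not conceptual but the careful index-matching in the reduction — in particular getting the norm estimate for $\|\boldsymbol{L}\|^2$ to land exactly on \eqref{e:2013-04-30b}, and making sure the ``$\approx$'' errors in the $(r+1)$-block and $D_k$-blocks of \eqref{e:5h8Njiq-11b} line up with the correspondingly labeled quantities in \eqref{e:2013-05-06x}; everything else is a direct appeal to the cited results.
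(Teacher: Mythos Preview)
Your overall strategy is exactly the paper's: identify Problem~\ref{prob:1} with an instance of Problem~\ref{prob:10} via the block operators of Proposition~\ref{p:2013-05-16}, verify the norm bound \eqref{e:roma12} so that the constants $\beta$ in \eqref{e:5h8Njiq-11a} and \eqref{e:2013-04-30b} match, check that \eqref{e:2013-05-06x} is \eqref{e:5h8Njiq-11b} under the identifications, and then invoke Theorem~\ref{t:10} together with Proposition~\ref{p:2013-05-16}\ref{p:2013-05-16i}--\ref{p:2013-05-16ii} for the primal and dual interpretations. Parts~\ref{t:1ii}--\ref{t:1iii} transfer exactly as you say.

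There is, however, one genuine gap. To apply Theorem~\ref{t:10} you must verify that the reformulated system \eqref{e:IJ834hj8fr-24p} (equivalently, that \eqref{e:jeddah2013-05-16b}) has a solution. You write that this ``is exactly the assumption that \eqref{e:2013-04-30p} has a solution, via the equivalence embedded in Proposition~\ref{p:2013-05-16}.'' But Proposition~\ref{p:2013-05-16} only goes one way: it assumes \eqref{e:jeddah2013-05-16b} and deduces that $\overline{x}$ solves \eqref{e:2013-04-30p}. It does \emph{not} produce $(\overline{y_1},\ldots,\overline{y_r})$ and dual variables from a solution of \eqref{e:2013-04-30p}; invoking it here is circular. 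The paper supplies this missing direction separately: it introduces the auxiliary system \eqref{e:2013-05-04a} and, via the chain of equivalences and the final implication in \eqref{e:2013-05-02x}, shows that any solution $x$ of \eqref{e:2013-04-30p} can be augmented by suitable $(y_k)_{1\leq k\leq r}$ so that $(x,y_1,\ldots,y_r)$ solves \eqref{e:2013-05-04a}, which is the specialization of \eqref{e:IJ834hj8fr-24p}. The argument is elementary (unfold the parallel-sum definition to extract $u_k$ and then $y_k$), but it is not contained in Proposition~\ref{p:2013-05-16} and must be written out. Once you insert this step, your proof coincides with the paper's.
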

\begin{proof}
We introduce the auxiliary problem
\begin{multline}
\label{e:2013-05-04a}
\text{find}\;\;\overline{x}\in\HH,\,\overline{y_1}\in\HH,\ldots,\,
\overline{y_r}\in\HH\;\;\text{such that}\\
\begin{cases}
z&\in A\overline{x}+\sum_{k=1}^rL_k^*\big(B_k(L_k\overline{x}-
L_k\overline{y_k})\big)+C\overline{x}\\
0&\in -L_1^*\big(B_1(L_1\overline{x}-L_1\overline{y_1})\big)
+M_1^*\big(D_1(M_1\overline{y_1})\big)\\
&\;\vdots\\
0&\in -L_r^*\big(B_r(L_r\overline{x}-L_r\overline{y_r})\big)
+M_r^*\big(D_r(M_r\overline{y_r})\big).
\end{cases}
\end{multline}
Let $x\in\HH$. Then 
\begin{align}
\label{e:2013-05-02x}
x~\text{solves \eqref{e:2013-04-30p}}
&\Leftrightarrow
z\in Ax+
\sum_{k=1}^r\big((L_k^*\circ B_k\circ L_k)\infconv
(M_k^*\circ D_k\circ M_k)\big)x+Cx\nonumber\\
&\Leftrightarrow 
\big(\exi(u_k)_{1\leq k\leq r}\in\HH^r\big)\quad
\begin{cases}
z&\in A{x}+\sum_{k=1}^ru_k+C{x}\\
u_1&\in\big((L_1^*\circ B_1\circ L_1)\infconv
(M_1^*\circ D_1\circ M_1)\big){x}\\
&\;\vdots\\
u_r&\in\big((L_r^*\circ B_r\circ L_r)\infconv
(M_r^*\circ D_r\circ M_r)\big){x}
\end{cases}
\nonumber\\
&\Leftrightarrow 
\big(\exi(u_k)_{1\leq k\leq r}\in\HH^r\big)\quad
\begin{cases}
z&\in A{x}+\sum_{k=1}^ru_k+C{x}\\
x&\in (L_1^*\circ B_1\circ L_1)^{-1}u_1+
(M_1^*\circ D_1\circ M_1)^{-1}u_1\\
&\;\vdots\\
x&\in (L_r^*\circ B_r\circ L_r)^{-1}u_r+
(M_r^*\circ D_r\circ M_r)^{-1}u_r
\end{cases}
\nonumber\\
&\Leftrightarrow 
\big(\exi(u_k)_{1\leq k\leq r}\in\HH^r\big)
\big(\exi(y_k)_{1\leq k\leq r}\in\HH^r\big)\nonumber\\
&\quad\;\;
\qquad\begin{cases}
z&\in A{x}+\sum_{k=1}^ru_k+C{x}\\
x-y_1&\in(L_1^*\circ B_1\circ L_1)^{-1}u_1\\
y_1&\in(M_1^*\circ D_1\circ M_1)^{-1}u_1\\
&\;\vdots\\
x-y_r&\in(L_r^*\circ B_r\circ L_r)^{-1}u_r\\
y_r&\in(M_r^*\circ D_r\circ M_r)^{-1}u_r
\end{cases}
\nonumber\\
&\Leftrightarrow 
\big(\exi(u_k)_{1\leq k\leq r}\in\HH^r\big)
\big(\exi(y_k)_{1\leq k\leq r}\in\HH^r\big)\nonumber\\
&\quad\;\;
\qquad\begin{cases}
z&\in A{x}+\sum_{k=1}^ru_k+C{x}\\
u_1&\in(L_1^*\circ B_1\circ L_1)(x-y_1)\\
u_1&\in(M_1^*\circ D_1\circ M_1)y_1\\
&\;\vdots\\
u_r&\in(L_r^*\circ B_r\circ L_r)(x-y_r)\\
u_r&\in(M_r^*\circ D_r\circ M_r)y_r
\end{cases}
\nonumber\\
&\Rightarrow 
\big(\exi(y_k)_{1\leq k\leq r}\in\HH^r\big)\nonumber\\
&\quad\;\;
\qquad\begin{cases}
z&\in A{x}+\sum_{k=1}^rL_k^*\big(B_k(L_kx-L_ky_k)\big)+C{x}\\
0&\in -L_1^*\big(B_1(L_1x-L_1y_1)\big)+M_1^*\big(D_1(M_1y_1)\big)\\
&\;\vdots\\
0&\in -L_r^*\big(B_r(L_rx-L_ry_r)\big)+M_r^*\big(D_r(M_ry_r)\big).
\end{cases}
\end{align}
Hence since, by assumption, \eqref{e:2013-04-30p} has at least one 
solution, 
\begin{equation}
\label{e:2013-05-07c}
\text{\eqref{e:2013-05-04a} has at least one solution.}
\end{equation}
Next, we set
\begin{equation}
\label{e:bataclan2013-03-20a}
\begin{cases}
m=r+1\\
K=2r\\
\HH_1=\HH\\
A_1=A\\
C_1=C\\
\mu_1=\mu\\
z_1=z
\end{cases}
\quad\text{and}\quad
(\forall k\in\{1,\ldots,r\})\quad
\begin{cases}
\HH_{k+1}=\HH\\
A_{k+1}=0\\
B_{k+r}=D_k\\
C_{k+1}=0\\
\mu_{k+1}=0\\
z_{k+1}=0.
\end{cases}
\end{equation}
We also define
\begin{multline}
\label{e:guadeloupe2012-10-29e}
(\forall k\in\{1,\ldots,r\})\quad 
\GG_{k+r}=\KK_k
\quad\text{and}\\
(\forall i\in\{1,\ldots,m\})\quad 
L_{ki}=
\begin{cases}
L_k,&\text{if}\;\;1\leq k\leq r\;\;\text{and}\;\;i=1;\\
-L_k,&\text{if}\;\;1\leq k\leq r\;\;\text{and}\;\;i=k+1;\\
M_{k-r},&\text{if}\;\;r+1\leq k\leq 2r\;\;\text{and}\;\;i=k-r+1;\\
0,&\text{otherwise.}
\end{cases}
\end{multline}
We observe that in this setting
\begin{equation}
\label{e:2013-05-07a}
\text{\eqref{e:2013-05-04a} is a special case of 
\eqref{e:IJ834hj8fr-24p}.}
\end{equation}
Moreover, if we set $\lambda=\sum_{k=1}^r\|L_k\|^2+
\max_{1\leq k\leq r}(\|L_k\|^2+\|M_k\|^2)$, we deduce from the 
Cauchy-Schwarz inequality in $\RR^2$ that, for every 
$(x_i)_{1\leq i\leq m}=
(x,y_1,\ldots,y_{r})\in\bigoplus_{i=1}^m\HH$, 
\begin{align}
\label{e:roma12}
\sum_{k=1}^K\bigg\|\sum_{i=1}^mL_{ki}x_i\bigg\|^2
&=\|(L_1x-L_1y_1,\ldots,L_rx-L_ry_r,M_1y_1,\ldots,M_ry_r)\|^2
\nonumber\\
&\leq\big(\|(L_1x,\ldots,L_rx)\|+\|(L_1y_1,\ldots,L_ry_r,
M_1y_1,\ldots,M_ry_r)\|\big)^2
\nonumber\\
&=\left(\sqrt{\sum_{k=1}^r\|L_kx\|^2}
+\sqrt{\sum_{k=1}^r\big(\|L_ky_k\|^2+\|M_ky_k\|^2\big)}\,\right)^2
\nonumber\\
&\leq\left(\sqrt{\sum_{k=1}^r\|L_k\|^2}\:\|x\|
+\sqrt{\sum_{k=1}^r(\|L_k\|^2+\|M_k\|^2)\,\|y_k\|^2}\,\right)^2
\nonumber\\
&\leq\left(\sqrt{\sum_{k=1}^r\|L_k\|^2}\:\|x\|
+\max_{1\leq k\leq r}\sqrt{\|L_k\|^2+
\|M_k\|^2}\:\|(y_1,\ldots,y_r)\|\,\right)^2
\nonumber\\
&\leq\left(\sum_{k=1}^r\|L_k\|^2
+\max_{1\leq k\leq r}\big(\|L_k\|^2+\|M_k\|^2\big)\right)
\big(\|x\|^2+\|(y_1,\ldots,y_r)\|^2\big)
\nonumber\\
&=\lambda\sum_{i=1}^m\|x_i\|^2.
\end{align}
Thus, 
\begin{equation}
\label{e:2013-05-07b}
\text{\eqref{e:2013-04-30b} is a special case of 
\eqref{e:5h8Njiq-11a}.}
\end{equation}
Now, let us define
\begin{equation}
\label{e:2013-05-05a}
(\forall n\in\NN)\quad x_{1,n}=x_n\quad\text{and}\quad
(\forall k\in\{1,\ldots,r\})\quad 
\begin{cases}
x_{k+1,n}=y_{k,n}\\
v_{k+r,n}=w_{k,n}.
\end{cases}
\end{equation}
Then it follows from \eqref{e:bataclan2013-03-20a} that
\begin{equation}
\label{e:2013-05-07d}
\text{\eqref{e:2013-05-06x} is a special case of 
\eqref{e:5h8Njiq-11b}.}
\end{equation}
Altogether, Theorem~\ref{t:10}\ref{t:10iia}--\ref{t:10iid} asserts 
that there exist a solution 
$\overline{\boldsymbol{x}}
=(\overline{x_1},\ldots,\overline{x_{m}})= 
(\overline{x},\overline{y_1},\ldots,\overline{y_{r}})$ 
to \eqref{e:IJ834hj8fr-24p} and a solution 
$(\overline{v_1},\ldots,\overline{v_K})=
(\overline{v_1},\ldots,\overline{v_r},
\overline{w_1},\ldots,\overline{w_r})$ to 
\eqref{e:IJ834hj8fr-24d} which satisfy
\begin{equation}
\label{e:2013-05-05m}
x_n\weakly\overline{x}\quad\text{and}\quad
(\forall k\in\{1,\ldots,r\})\quad 
\begin{cases}
v_{k,n}\weakly\overline{v_k}\\
w_{k,n}\weakly\overline{w_k},
\end{cases}
\end{equation}
together with the inclusions
\begin{multline}
\label{e:2013-05-05b}
z-\sum_{k=1}^rL_{k}^*\overline{v_k}\in A\overline{x}+C\overline{x}
\quad\text{and}\quad
(\forall k\in\{1,\ldots,r\})\quad
\begin{cases}
L^*_k\overline{v_k}=M_k^*\overline{w_k}\\
L_k\overline{x}-L_k\overline{y}_k\in B_k^{-1}\overline{v_k}\\
M_k\overline{y}_k\in D_k^{-1}\overline{w_k}.
\end{cases}
\end{multline}
Using the notation \eqref{e:2013-05-11e}, we can rewrite 
\eqref{e:2013-05-05b} as 
\begin{equation}
\label{e:attento}
\boldsymbol{0}\in\boldsymbol{A}\overline{\boldsymbol{x}}
+\boldsymbol{L}^*\big(\boldsymbol{B}(\boldsymbol{L}
\overline{\boldsymbol{x}})\big).
\end{equation}
In turn, it follows from Proposition~\ref{p:2013-05-16}%
\ref{p:2013-05-16i}--\ref{p:2013-05-16ii} that
\begin{equation}
\label{e:2013-05-05e}
\overline{x}~\text{solves \eqref{e:2013-04-30p} and}~
(\overline{v_1},\ldots,\overline{v_r})~
\text{solves \eqref{e:2013-04-30d}}.
\end{equation}
This and \eqref{e:2013-05-05m} prove \ref{t:1i}. Finally, 
\ref{t:1ii}--\ref{t:1iii} follow from 
\eqref{e:bataclan2013-03-20a} and
Theorem~\ref{t:10}\ref{t:10iie}--\ref{t:10iif}.
\end{proof}

\begin{remark}
In the spirit of the splitting method of \cite{Siop13,Svva12}, 
the algorithm described in \eqref{e:2013-05-06x} achieves full 
decomposition in that every operator is used individually 
at each iteration.
\end{remark}

\section{Application to convex minimization}
\label{sec:4}

In this section we consider a structured minimization problem 
of the following format.

\begin{problem}
\label{prob:2}
Let $\HH$ be a real Hilbert space, let $r$ be a strictly positive
integer, let $z\in\HH$, let $f\in\Gamma_0(\HH)$, and let 
$\ell\colon\HH\to\RR$ be a differentiable convex function such
that $\nabla\ell$ is $\mu$-Lipschitzian for some $\mu\in\RP$. 
For every integer $k\in\{1,\ldots,r\}$, let $\GG_k$ and $\KK_k$
be real Hilbert spaces, let $g_k\in\Gamma_0(\GG_k)$ and
$h_k\in\Gamma_0(\KK_k)$, and let $L_k\in\BL(\HH,\GG_k)$ and
$M_k\in\BL(\HH,\KK_k)$. It is assumed that
\begin{equation}
\label{e:2013-05-08b}
\beta=\mu+\sqrt{\sum_{k=1}^r\|L_k\|^2+\max_{1\leq k\leq r}
\big(\|L_k\|^2+\|M_k\|^2\big)}\:>0,
\end{equation}
that
\begin{equation}
\label{e:2013-05-12q}
(\forall k\in\{1,\ldots,r\})\quad
0\in\sri\big(\dom(g_k\circ L_k)^*-M_k^*(\dom h^*_k)\big),
\end{equation}
that
\begin{equation}
\label{e:2013-05-17q}
(\forall k\in\{1,\ldots,r\})\quad
0\in\sri\big(\ran M_k-\dom h_k\big),
\end{equation}
and that 
\begin{equation}
\label{e:R-stay}
z\in\ran\bigg(
\partial f+\sum_{k=1}^r\big((L_k^*\circ(\partial g_k)\circ L_k)
\infconv (M_k^*\circ(\partial h_k)\circ M_k)\big)+
\nabla\ell\bigg).
\end{equation}
Solve the primal problem 
\begin{equation}
\label{e:2013-05-08p}
\minimize{x\in\HH}{f(x)+\sum_{k=1}^r\big( (g_k\circ L_k) 
\infconv(h_k\circ M_k)\big)(x)+\ell(x)-\scal{x}{z}},
\end{equation}
together with the dual problem
\begin{equation}
\label{e:2013-05-08d}
\minimize{v_1\in\GG_1,\ldots,\,v_r\in\GG_r}{(f^*\infconv\ell^*)
\bigg(z-\sum_{k=1}^rL_{k}^*v_k\bigg)+\sum_{k=1}^r
\big(g^*_k(v_k)+(M^*_k\pushfwd h^*_k)(L^*_kv_k)\big)}.
\end{equation}
\end{problem}

Special cases when \eqref{e:2013-05-12q} and \eqref{e:2013-05-17q} 
are satisfied can be derived from \cite[Proposition~15.24]{Livre1}.
The next proposition describes scenarios in which 
\eqref{e:R-stay} holds.

\begin{proposition}
\label{p:2013-05-17b}
Consider the same setting as in Problem~\ref{prob:2} with the
exception that assumption \eqref{e:R-stay} is not made and is
replaced by the assumptions that 
\begin{multline}
\label{e:cq}
\boldsymbol{E}=\Big\{\big(L_1(x-y_1)-s_1,\ldots,L_r(x-y_r)-s_r,
M_1y_1-t_1,\ldots,M_ry_r-t_r\big)~\Big|~x\in\dom f,\\
y_1\in\HH,\ldots, y_r\in\HH,
s_1\in\dom g_1,\ldots,s_r\in\dom g_r,
t_1\in\dom h_1,\ldots,t_r\in\dom h_r\Big\}\neq\emp
\end{multline}
and that \eqref{e:2013-05-08p} has a solution. 
Then \eqref{e:R-stay} is satisfied in each of the following
cases.
\begin{enumerate}
\item
\label{p:2013-05-17bi}
$\boldsymbol{0}\in\sri\boldsymbol{E}$.
\item
\label{p:2013-05-17bii}
$\boldsymbol{E}$ is a closed vector subspace.
\item
\label{p:2013-05-17biii}
$f$ is real-valued and, for every $k\in\{1,\ldots,r\}$, the 
operators $L_k$ and $M_k$ are surjective. 
\item
\label{p:2013-05-17biv}
For every $k\in\{1,\ldots,r\}$, $g_k$ and $h_k$ are real-valued.
\item
\label{p:2013-05-17bv}
$\HH$, $(\GG_k)_{1\leq k\leq r}$, and $(\KK_k)_{1\leq k\leq r}$ 
are finite-dimensional, and 
\begin{equation}
\label{e:cq2}
(\exi x\in\reli\dom f)(\forall k\in\{1,\ldots,K\})(\exi y_k\in\HH)
\quad 
\begin{cases}
L_k(x-y_k)\in\reli\dom g_k\\
M_ky_k\in\reli\dom h_k.
\end{cases}
\end{equation}
\end{enumerate}
\end{proposition}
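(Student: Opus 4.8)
The plan is to pass from a minimizer $\overline x$ of \eqref{e:2013-05-08p} to a solution of the inclusion \eqref{e:2013-04-30p} with $A=\partial f$, $C=\nabla\ell$, $B_k=\partial g_k$ and $D_k=\partial h_k$; since solvability of that inclusion is exactly what \eqref{e:R-stay} asserts, this is all that must be shown. Set $\varphi_k=(g_k\circ L_k)\infconv(h_k\circ M_k)$, so $\overline x$ minimizes $f+\sum_{k=1}^r\varphi_k+\ell-\scal{\cdot}{z}$; Fermat's rule gives $z\in\partial(f+\sum_{k=1}^r\varphi_k+\ell)(\overline x)$, and the argument is just the unfolding of this membership by subdifferential calculus.

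The first ingredient is a calculus identity for each $\varphi_k$. By \eqref{e:2013-05-17q} the function $h_k\circ M_k$ lies in $\Gamma_0(\HH)$, with $(h_k\circ M_k)^*=M_k^*\pushfwd h_k^*$ (hence $\dom(h_k\circ M_k)^*=M_k^*(\dom h_k^*)$) and $\partial(h_k\circ M_k)=M_k^*\circ(\partial h_k)\circ M_k$. Applying Lemma~\ref{l:1} to the pair $g_k\circ L_k$, $h_k\circ M_k$ and the identity operator of $\HH$ (noting that $\Id$ leaves a function unchanged under parallel composition), whose qualification hypothesis $0\in\sri(\dom(h_k\circ M_k)^*-\dom(g_k\circ L_k)^*)$ is precisely \eqref{e:2013-05-12q}, then yields $\varphi_k\in\Gamma_0(\HH)$ and $\partial\varphi_k=\partial(g_k\circ L_k)\infconv(M_k^*\circ(\partial h_k)\circ M_k)$. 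The second ingredient is the splitting of the subdifferential at $\overline x$: a sum rule gives $z=a+\sum_{k=1}^rp_k+\nabla\ell(\overline x)$ with $a\in\partial f(\overline x)$ and $p_k\in\partial\varphi_k(\overline x)$; by the identity above each $p_k$ then admits, for a suitable $\overline y_k\in\HH$, the purely algebraic decomposition $p_k\in\partial(g_k\circ L_k)(\overline x-\overline y_k)$ and $p_k\in M_k^*(\partial h_k(M_k\overline y_k))$; and a chain rule sharpens the former into $p_k\in L_k^*(\partial g_k(L_k\overline x-L_k\overline y_k))$. The data $(\overline x,(\overline y_k)_{1\le k\le r},(p_k)_{1\le k\le r})$ then satisfies exactly the condition that, along the chain of equivalences \eqref{e:2013-05-02x} in the proof of Theorem~\ref{t:1} (taken with $A=\partial f$, $C=\nabla\ell$, $B_k=\partial g_k$, $D_k=\partial h_k$), is equivalent to ``$\overline x$ solves \eqref{e:2013-04-30p}''; hence \eqref{e:R-stay} holds.

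Only two of the calculus steps above use more than the standing assumptions of Problem~\ref{prob:2}: the sum rule for $f+\sum_k\varphi_k+\ell$ at $\overline x$, and the chain rule for $g_k\circ L_k$ at $\overline x-\overline y_k$. Both are governed by $\boldsymbol E$: with $\boldsymbol L$ the operator of Proposition~\ref{p:2013-05-16}, the set $\boldsymbol E$ of \eqref{e:cq} is the image of $\dom f\times\HH^r\times\big(\prod_{k=1}^r\dom g_k\big)\times\big(\prod_{k=1}^r\dom h_k\big)$ under the bounded linear map $\big(x,(y_k),(s_k),(t_k)\big)\mapsto\big(L_k(x-y_k)-s_k,M_ky_k-t_k\big)_{1\le k\le r}$, so $\boldsymbol{0}\in\sri\boldsymbol E$ is precisely the Attouch--Br\'ezis qualification condition for the subdifferential calculus of the product-space objective $(x,y_1,\ldots,y_r)\mapsto f(x)+\ell(x)-\scal{x}{z}+\sum_{k=1}^r\big(g_k(L_kx-L_ky_k)+h_k(M_ky_k)\big)$, of which $(\overline x,\overline y_1,\ldots,\overline y_r)$ is a minimizer (each $\varphi_k$ being exact at $\overline x$ since $\partial\varphi_k(\overline x)\neq\emp$); this proves \ref{p:2013-05-17bi}. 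Case \ref{p:2013-05-17bii} reduces to it, the strong relative interior of a nonempty closed subspace being the subspace itself, so that $\boldsymbol{0}\in\boldsymbol E=\sri\boldsymbol E$. In case \ref{p:2013-05-17biii} the sum rule requires no qualification because $f$ is then real-valued, hence continuous, and the chain rules require none because the $L_k$ and $M_k$ are surjective; case \ref{p:2013-05-17biv} is analogous, the composites $g_k\circ L_k$ and $h_k\circ M_k$ then being finite-valued. Finally, in the finite-dimensional case \ref{p:2013-05-17bv} the strong relative interior coincides with the relative interior and \eqref{e:cq2} is the classical Rockafellar qualification condition, which again legitimizes every rule invoked.

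I expect the last paragraph to be the real work: granting the standard subdifferential calculus, the passage from ``$\overline x$ minimizes \eqref{e:2013-05-08p}'' to ``$\overline x$ solves \eqref{e:2013-04-30p}'' is merely a reverse reading of equivalences already established in the proof of Theorem~\ref{t:1}, whereas the effort lies in checking that each of the five hypotheses simultaneously validates the outer sum rule and the inner chain rules---equivalently, activates the Fenchel--Rockafellar calculus for the product-space composite governed by $\boldsymbol E$.
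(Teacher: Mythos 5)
Your high-level strategy---lift everything to the product space $\HHH=\HH^{r+1}$, read $\boldsymbol{0}\in\sri\boldsymbol{E}$ as a Fenchel--Rockafellar qualification there, and reduce \ref{p:2013-05-17bii}--\ref{p:2013-05-17bv} to \ref{p:2013-05-17bi}---is the paper's, but the middle paragraph, where \eqref{e:R-stay} is actually derived, has a genuine gap. You obtain \eqref{e:R-stay} from (a) a sum rule for $\partial\big(f+\sum_k\varphi_k+\ell-\scal{\cdot}{z}\big)$ on $\HH$ and (b) a chain rule upgrading $p_k\in\partial(g_k\circ L_k)(\overline x-\overline y_k)$ to $p_k\in L_k^*\big(\partial g_k(L_k\overline x-L_k\overline y_k)\big)$. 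Neither is among the standing hypotheses (\eqref{e:2013-05-12q} constrains $\dom(g_k\circ L_k)^*-M_k^*(\dom h_k^*)$, not $\ran L_k-\dom g_k$ or the domains of $f$ and the $\varphi_k$), and your only justification is the assertion that both are ``governed by $\boldsymbol{E}$.'' That is not established: $\boldsymbol{0}\in\sri\boldsymbol{E}=\boldsymbol{L}(\dom\boldsymbol{f})-\dom\boldsymbol{g}$ validates the single product-space identity $\partial(\boldsymbol{f}+\boldsymbol{g}\circ\boldsymbol{L})=\partial\boldsymbol{f}+\boldsymbol{L}^*\circ(\partial\boldsymbol{g})\circ\boldsymbol{L}$, and since $\boldsymbol{E}$ couples $x$ and the $y_k$ across its components, it does not split into the separate qualifications your outer sum rule and $r$ inner chain rules would require. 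Your exactness argument is also circular: you justify attainment of the infimum defining $\varphi_k(\overline x)$ by $\partial\varphi_k(\overline x)\neq\emp$, which you only know once the sum rule you are trying to legitimize is in place; the paper instead derives exactness at \emph{every} point directly from \eqref{e:2013-05-12q} and \eqref{e:2013-05-17q} via \cite[Proposition~15.7]{Livre1}, before any Fermat rule is invoked.

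The repair is to stay in the product space throughout, which also reveals that the chain rule (b) is never needed. Exactness makes $(\overline x,\overline y_1,\ldots,\overline y_r)$ a minimizer of $\boldsymbol{f}+\boldsymbol{g}\circ\boldsymbol{L}$; Fermat's rule plus the product-space identity give $\boldsymbol{0}\in\partial\boldsymbol{f}(\overline{\boldsymbol{x}})+\boldsymbol{L}^*\big(\partial\boldsymbol{g}(\boldsymbol{L}\overline{\boldsymbol{x}})\big)$, i.e.\ explicit vectors $\overline v_k\in\partial g_k(L_k\overline x-L_k\overline y_k)$ and $\overline w_k\in\partial h_k(M_k\overline y_k)$ with $L_k^*\overline v_k=M_k^*\overline w_k$ and $z-\sum_kL_k^*\overline v_k\in\partial f(\overline x)+\nabla\ell(\overline x)$. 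From these, only the trivial implication $v\in\partial g_k(L_ks)\Rightarrow L_k^*v\in(L_k^*\circ\partial g_k\circ L_k)s$ is needed to conclude $L_k^*\overline v_k\in\big((L_k^*\circ(\partial g_k)\circ L_k)\infconv(M_k^*\circ(\partial h_k)\circ M_k)\big)(\overline x)$---this is exactly Proposition~\ref{p:2013-05-16}\ref{p:2013-05-16i}---and summing yields \eqref{e:R-stay}. Note that \eqref{e:R-stay} involves the operators $L_k^*\circ(\partial g_k)\circ L_k$ rather than $\partial(g_k\circ L_k)$, so producing the dual vectors $\overline v_k$ directly is precisely what makes the chain rule dispensable; your cases \ref{p:2013-05-17biii} and \ref{p:2013-05-17biv} should likewise be handled by checking $\boldsymbol{E}=\GGG\oplus\KKK$ rather than by re-validating the individual calculus rules.
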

\begin{proof}
Let us define $\HHH$, $\GGG$, and $\KKK$ as in 
\eqref{e:2013-05-16e}, $\boldsymbol{L}$ as in 
\eqref{e:2013-05-11e}, and let us set
\begin{equation}
\label{e:2013-05-11E}
\begin{cases}
\boldsymbol{f}\colon\HHH\to\RX\colon
\boldsymbol{x}=(x,y_1,\ldots,y_r)\mapsto
f(x)+\ell(x)-\scal{x}{z}\\
\boldsymbol{g}\colon\GGG\oplus\KKK\to\RX\colon\boldsymbol{s}
=(s_1,\ldots,s_r,t_1,\ldots,t_r)\mapsto
\sum_{k=1}^r\big(g_k(s_k)+h_k(t_k)\big).
\end{cases}
\end{equation}
Then we can rewrite \eqref{e:cq} as
\begin{equation}
\label{e:2013-05-17m}
\boldsymbol{E}=\boldsymbol{L}
(\dom\boldsymbol{f})-\dom\boldsymbol{g}.
\end{equation}

\ref{p:2013-05-17bi}:
Since $\boldsymbol{E}\neq\emp$, the functions
$(g_k\circ L_k)_{1\leq k\leq r}$ and
$(h_k\circ M_k)_{1\leq k\leq r}$ are proper
and therefore in $\Gamma_0(\HH)$.
In turn, the Fenchel-Moreau theorem 
\cite[Theorem~13.32]{Livre1} asserts that the functions
$((g_k\circ L_k)^*)_{1\leq k\leq r}$ and
$((h_k\circ M_k)^*)_{1\leq k\leq r}$ are in $\Gamma_0(\HH)$.
On the other hand, since \eqref{e:2013-05-17q} and
\cite[Corollary~15.28(i)]{Livre1} imply that
\begin{equation}
\label{e:2013-05-18a}
(\forall k\in\{1,\ldots,r\})\quad
(h_k\circ M_k)^*=M_k^*\pushfwd h_k^*,
\end{equation}
\eqref{e:2013-05-12q} and \cite[Proposition~12.34(i)]{Livre1} 
yield
\begin{align}
\label{e:2013-05-18q}
(\forall k\in\{1,\ldots,r\})\quad
0
&\in\sri\big(\dom(g_k\circ L_k)^*-M_k^*(\dom h^*_k)\big)\nonumber\\
&=\sri\big(\dom(g_k\circ L_k)^*-
\dom(M_k^*\pushfwd h_k^*)\big)\nonumber\\
&=\sri\big(\dom(g_k\circ L_k)^*-
\dom(h_k\circ M_k)^*\big).
\end{align}
Hence, we derive from \cite[Proposition~15.7]{Livre1} that
\begin{multline}
\label{e:2013-05-11a}
(\forall k\in\{1,\ldots,r\})(\forall x\in\HH)(\exi y_k\in\HH)\\
\big((g_k\circ L_k)\infconv(h_k\circ M_k)\big)(x)=
g_k(L_kx-L_ky_k)+h_k(M_ky_k),
\end{multline}
which allows us to rewrite \eqref{e:2013-05-08p} as a minimization
problem on $\HHH$, namely
\begin{equation}
\label{e:2013-05-11p}
\minimize{x\in\HH,y_1\in\HH,\ldots,y_r\in\HH}
{f(x)+\ell(x)-\scal{x}{z}+\sum_{k=1}^r\big(g_k(L_kx-L_ky_k)
+h_k(M_ky_k)\big)}
\end{equation}
or, equivalently, 
\begin{equation}
\label{e:2013-05-11p'}
\minimize{\boldsymbol{x}\in\HHH}
{\boldsymbol{f}(\boldsymbol{x})+\boldsymbol{g}
(\boldsymbol{L}\boldsymbol{x})}.
\end{equation}
It follows from \eqref{e:2013-05-17m} that
$\boldsymbol{0}\in\sri\big(\boldsymbol{L}(\dom\boldsymbol{f})-
\dom\boldsymbol{g}\big)$
and therefore from \cite[Theorem~16.37(i)]{Livre1}, that
\begin{equation}
\label{e:2013-05-11x}
\partial(\boldsymbol{f}+\boldsymbol{g}\circ
\boldsymbol{L})=\partial\boldsymbol{f}+
\boldsymbol{L}^*\circ(\partial \boldsymbol{g})\circ \boldsymbol{L}.
\end{equation}
Since, by assumption, \eqref{e:2013-05-08p} has a solution, so does
\eqref{e:2013-05-11p'}. By Fermat's rule 
\cite[Theorem~16.2]{Livre1}, this means that 
$\boldsymbol{0}\in\ran\partial(\boldsymbol{f}+\boldsymbol{g}\circ
\boldsymbol{L})$. Thus \eqref{e:2013-05-11x} yields
\begin{equation}
\label{e:2013-05-11f}
\boldsymbol{0}
\in\ran\big(\partial\boldsymbol{f}+
\boldsymbol{L}^*\circ(\partial\boldsymbol{g})\circ
\boldsymbol{L}\big).
\end{equation}
Let us introduce the operators
\begin{equation}
\label{e:2013-05-09a-}
A=\partial f,\;C=\nabla\ell,\quad\text{and}\quad
(\forall k\in\{1,\ldots,r\})\quad
\begin{cases}
B_k=\partial g_k\\
D_k=\partial h_k. 
\end{cases}
\end{equation}
We derive from \cite[Proposition~17.10]{Livre1} that $C$ is 
monotone and from \cite[Theorem~20.40]{Livre1}
that the operators $A$, $(B_k)_{1\leq k\leq r}$, and 
$(D_k)_{1\leq k\leq r}$ are maximally monotone. 
Next, let us define $\boldsymbol{A}$ and $\boldsymbol{B}$ as 
in \eqref{e:2013-05-11e}. Then it follows from 
\eqref{e:2013-05-11f} and \cite[Proposition~16.8]{Livre1} 
that \eqref{e:jeddah2013-05-16b} holds.
In turn, Proposition~\ref{p:2013-05-16}\ref{p:2013-05-16i} asserts
that \eqref{e:R-stay} is satisfied.

\ref{p:2013-05-17bii}$\Rightarrow$\ref{p:2013-05-17bi}:
This follows from 
\cite[Proposition~6.19(i)]{Livre1}. 

\ref{p:2013-05-17biii}$\Rightarrow$\ref{p:2013-05-17bi} and
\ref{p:2013-05-17biv}$\Rightarrow$\ref{p:2013-05-17bi}:
In both cases $\boldsymbol{E}=\GGG\oplus\KKK$.

\ref{p:2013-05-17bv}$\Rightarrow$\ref{p:2013-05-17bi}:
Since $\HHH$, $\GGG$, and $\KKK$ are finite-dimensional,
\eqref{e:2013-05-17m} and
\cite[Corollary~6.15]{Livre1} imply that
\begin{align}
\text{\eqref{e:cq2}}
&\Leftrightarrow 
(\exi\boldsymbol{x}\in\reli\dom \boldsymbol{f})\quad
\boldsymbol{L}\boldsymbol{x}\in\reli\dom \boldsymbol{g}\nonumber\\
&\Leftrightarrow 
\boldsymbol{0}\in\big(\boldsymbol{L}
(\reli\dom\boldsymbol{f})-\reli\dom\boldsymbol{g}\big)\nonumber\\
&\Leftrightarrow 
\boldsymbol{0}\in\reli\big(\boldsymbol{L}
(\dom\boldsymbol{f})-\dom\boldsymbol{g}\big)\nonumber\\
&\Leftrightarrow 
\boldsymbol{0}\in\reli\boldsymbol{E}\nonumber\\
&\Leftrightarrow 
\boldsymbol{0}\in\sri\boldsymbol{E},
\end{align}
which completes the proof.
\end{proof}

Next, we propose our algorithm for solving Problem~\ref{prob:2}.

\begin{theorem}
\label{t:2}
Consider the setting of Problem~\ref{prob:2}. Let $x_{0}\in\HH$, 
$y_{1,0}\in\HH$, \ldots, $y_{r,0}\in\HH$, $v_{1,0}\in\GG_1$, 
\ldots, $v_{r,0}\in\GG_r$, $w_{1,0}\in\KK_1$, \ldots, 
$w_{r,0}\in\KK_r$, 
let $\varepsilon\in\left]0,1/(\beta+1)\right[$,
let $(\gamma_n)_{n\in\NN}$ be a sequence in 
$[\varepsilon,(1-\varepsilon)/\beta]$, and set
\begin{equation}
\label{e:2013-05-08x}
\begin{array}{l}
\text{For}\;n=0,1,\ldots\\
\left\lfloor
\begin{array}{l}
s_{1,1,n}\approx x_{n}-\gamma_n(\nabla\ell(x_{n})+
\sum_{k=1}^rL_{k}^*v_{k,n})\\[1mm]
p_{1,1,n}\approx \prox_{\gamma_n f}(s_{1,1,n}+\gamma_nz)\\[2mm]
\text{For}\;k=1,\ldots,r\\
\left\lfloor
\begin{array}{l}
p_{1,k+1,n}\approx y_{k,n}+\gamma_n(L_{k}^*v_{k,n}-
M_{k}^*w_{k,n})\\[1mm]
s_{2,k,n}\approx v_{k,n}+\gamma_nL_k(x_n-y_{k,n})\\[2mm]
p_{2,k,n}\approx s_{2,k,n}-\gamma_n \prox_{\gamma_n^{-1}g_k}
(\gamma_n^{-1}s_{2,k,n})\\[2mm]
q_{2,k,n}\approx p_{2,k,n}+
\gamma_nL_k(p_{1,1,n}-p_{1,k+1,n})\\[2mm]
v_{k,n+1}=v_{k,n}-s_{2,k,n}+q_{2,k,n}\\[2mm]
s_{2,k+r,n}\approx w_{k,n}+\gamma_nM_ky_{k,n}\\[2mm]
p_{2,k+r,n}\approx s_{2,k+r,n}-\gamma_n\big(\prox_{\gamma_n^{-1}h_k}
(\gamma_n^{-1}s_{2,k+r,n})\big)\\[2mm]
q_{1,k+1,n}\approx p_{1,k+1,n}+\gamma_n
(L_k^*p_{2,k,n}-M_k^*p_{2,k+r,n})\\[2mm]
q_{2,k+r,n}\approx p_{2,k+r,n}+\gamma_nM_{k}p_{1,k+1,n}\\[1mm]
w_{k,n+1}=w_{k,n}-s_{2,k+r,n}+q_{2,k+r,n}\\
\end{array}
\right.\\[2mm]
q_{1,1,n}\approx p_{1,1,n}-\gamma_n(\nabla\ell(p_{1,1,n})+
\sum_{k=1}^rL_k^*p_{2,k,n})\\
x_{n+1}=x_{n}-s_{1,1,n}+q_{1,1,n}\\[1mm]
\text{For}\;k=1,\ldots,r\\
\left\lfloor
\begin{array}{l}
y_{k,n+1}=y_{k,n}-p_{1,k+1,n}+q_{1,k+1,n}.
\end{array}
\right.\\
\end{array}
\right.\\
\end{array}
\end{equation}
Then the following hold for some solution $\overline{x}$ to 
\eqref{e:2013-05-08p} and some solution 
$(\overline{v_1},\ldots,\overline{v_r})$ to \eqref{e:2013-05-08d}.
\begin{enumerate}
\item
\label{t:2i}
$x_n\weakly\overline{x}~$ and 
$~(\forall k\in\{1,\ldots,r\})$ $v_{k,n}\weakly\overline{v_k}$.
\item
\label{t:2ii}
Suppose that $f$ or $\ell$ is uniformly convex at 
$\overline{x}$. Then $x_{n}\to\overline{x}$.
\item
\label{t:2iii}
Suppose that, for some $k\in\{1,\ldots,r\}$, $g^*_k$ 
is uniformly convex at $\overline{v_k}$. Then
$v_{k,n}\to\overline{v_k}$.
\end{enumerate}
\end{theorem}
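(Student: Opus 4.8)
The plan is to reduce Theorem~\ref{t:2} to Theorem~\ref{t:1} by specializing the monotone operators to subdifferentials and gradients, exactly as in the proof of Proposition~\ref{p:2013-05-17b}. First I would set $A=\partial f$, $C=\nabla\ell$, and, for every $k\in\{1,\ldots,r\}$, $B_k=\partial g_k$ and $D_k=\partial h_k$, as in \eqref{e:2013-05-09a-}. By \cite[Proposition~17.10]{Livre1} and \cite[Theorem~20.40]{Livre1}, $C$ is monotone and $\mu$-Lipschitzian (the latter by hypothesis on $\nabla\ell$), while $A$, $(B_k)_{1\le k\le r}$, and $(D_k)_{1\le k\le r}$ are maximally monotone. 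Since \eqref{e:2013-05-08b} coincides with \eqref{e:2013-04-30b}, the constant $\beta$ is the same, and assumption \eqref{e:R-stay} says precisely that the inclusion \eqref{e:2013-04-30p} has a solution for this choice of operators. Thus the data satisfy the hypotheses of Problem~\ref{prob:1}, and since $\prox_{\gamma_n f}=J_{\gamma_n\partial f}$, $\prox_{\gamma_n^{-1}g_k}=J_{\gamma_n^{-1}\partial g_k}$, $\prox_{\gamma_n^{-1}h_k}=J_{\gamma_n^{-1}\partial h_k}$ by \eqref{e:prox2}, the iteration \eqref{e:2013-05-08x} is literally the iteration \eqref{e:2013-05-06x}. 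Theorem~\ref{t:1}\ref{t:1i} then gives a solution $\overline{x}$ to \eqref{e:2013-04-30p} and a solution $(\overline{v_1},\ldots,\overline{v_r})$ to \eqref{e:2013-04-30d} with $x_n\weakly\overline{x}$ and $v_{k,n}\weakly\overline{v_k}$ for all $k$.

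Next I would identify the inclusion problems \eqref{e:2013-04-30p}--\eqref{e:2013-04-30d} with the minimization problems \eqref{e:2013-05-08p}--\eqref{e:2013-05-08d}. For the primal part, one needs $(g_k\circ L_k)\infconv(h_k\circ M_k)\in\Gamma_0(\HH)$ and the subdifferential formula $\partial\big((g_k\circ L_k)\infconv(h_k\circ M_k)\big)=(L_k^*\circ\partial g_k\circ L_k)\infconv(M_k^*\circ\partial h_k\circ M_k)$; this is Lemma~\ref{l:1} applied with $f\rightsquigarrow g_k\circ L_k$ and $g\rightsquigarrow h_k\circ M_k$, whose qualification hypothesis $0\in\sri(M_k^*(\dom(h_k\circ M_k)^{*\,*})-\dom(g_k\circ L_k)^*)$ follows from \eqref{e:2013-05-12q} and \eqref{e:2013-05-17q} together with \eqref{e:2013-05-18a}, i.e. $(h_k\circ M_k)^*=M_k^*\pushfwd h_k^*$ via \cite[Corollary~15.28(i)]{Livre1}. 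Combined with the Fenchel-Moreau theorem and a sum rule for subdifferentials, this shows that the objective in \eqref{e:2013-05-08p} has subdifferential $\partial f+\sum_k\big((L_k^*\circ\partial g_k\circ L_k)\infconv(M_k^*\circ\partial h_k\circ M_k)\big)+\nabla\ell-z$, so that by Fermat's rule the minimizers of \eqref{e:2013-05-08p} are exactly the solutions of \eqref{e:2013-04-30p}. For the dual part, \eqref{e:2013-04-30d} should be recognized, after using $B_k^{-1}=(\partial g_k)^{-1}=\partial g_k^*$ and $M_k^*\pushfwd D_k^{-1}=M_k^*\pushfwd\partial h_k^*=\partial(M_k^*\pushfwd h_k^*)$ (the latter by Corollary~\ref{c:1} under \eqref{e:2013-05-17q}, and note $(A+C)^{-1}=(\partial f+\nabla\ell)^{-1}=\partial(f\infconv\ell^*)^{*}$-type manipulations, more cleanly $(\partial(f+\ell))^{-1}=\partial(f+\ell)^*=\partial(f^*\infconv\ell^*)$), as Fermat's rule for \eqref{e:2013-05-08d}; one must check the sum rule for the dual objective's subdifferential, which again needs the qualification conditions \eqref{e:2013-05-12q}--\eqref{e:2013-05-17q}. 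Granting these identifications, $\overline{x}$ solves \eqref{e:2013-05-08p} and $(\overline{v_1},\ldots,\overline{v_r})$ solves \eqref{e:2013-05-08d}, proving \ref{t:2i}.

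Finally, for \ref{t:2ii}--\ref{t:2iii} I would invoke the standard link between uniform convexity of a function at a point and uniform monotonicity of its subdifferential there: if $f$ is uniformly convex at $\overline{x}$ then $\partial f$ is uniformly monotone at $\overline{x}$ (and likewise $\ell$ uniformly convex forces $\nabla\ell$ uniformly monotone), so Theorem~\ref{t:1}\ref{t:1ii} yields $x_n\to\overline{x}$; and if $g_k^*$ is uniformly convex at $\overline{v_k}$ then $\partial g_k^*=(\partial g_k)^{-1}=B_k^{-1}$ is uniformly monotone at $\overline{v_k}$, so Theorem~\ref{t:1}\ref{t:1iii} yields $v_{k,n}\to\overline{v_k}$. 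The only genuinely delicate point is bookkeeping the qualification conditions so that the two subdifferential sum/chain rules used in the second paragraph are valid; everything else is a direct translation of Theorem~\ref{t:1} into the convex setting, in the spirit of the proof of Proposition~\ref{p:2013-05-17b}.
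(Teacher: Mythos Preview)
Your reduction to Theorem~\ref{t:1} via $A=\partial f$, $C=\nabla\ell$, $B_k=\partial g_k$, $D_k=\partial h_k$, the identification of \eqref{e:2013-05-08x} with \eqref{e:2013-05-06x} through \eqref{e:prox2}, and your treatment of \ref{t:2ii}--\ref{t:2iii} via ``uniform convexity $\Rightarrow$ uniform monotonicity of the subdifferential'' are exactly what the paper does.

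The one point that needs correction is your claimed \emph{equality}
\[
\partial\big((g_k\circ L_k)\infconv(h_k\circ M_k)\big)
=(L_k^*\circ\partial g_k\circ L_k)\infconv(M_k^*\circ\partial h_k\circ M_k)
\]
and the ensuing assertion that the minimizers of \eqref{e:2013-05-08p} are \emph{exactly} the solutions of \eqref{e:2013-04-30p}. Under \eqref{e:2013-05-12q}--\eqref{e:2013-05-17q} the infimal-convolution subdifferential rule (this is \cite[Proposition~24.27]{Livre1}; Lemma~\ref{l:1}, which you cite, concerns $(L\pushfwd f)\infconv g$ and does not directly yield your formula) gives only $\partial\big((g_k\circ L_k)\infconv(h_k\circ M_k)\big)=\partial(g_k\circ L_k)\infconv\partial(h_k\circ M_k)$. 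Replacing $\partial(g_k\circ L_k)$ by $L_k^*\circ\partial g_k\circ L_k$ \emph{with equality} would need a chain-rule qualification such as $0\in\sri(\ran L_k-\dom g_k)$, which Problem~\ref{prob:2} does \emph{not} assume. The paper sidesteps this by using only the always-valid inclusion $L_k^*\circ\partial g_k\circ L_k\subset\partial(g_k\circ L_k)$ from \cite[Proposition~16.5(ii)]{Livre1}, which still yields
\[
(L_k^*\circ\partial g_k\circ L_k)\infconv(M_k^*\circ\partial h_k\circ M_k)\subset\partial\big((g_k\circ L_k)\infconv(h_k\circ M_k)\big),
\]
and this one-sided inclusion is all that Fermat's rule requires to conclude that any solution of \eqref{e:2013-04-30p} minimizes \eqref{e:2013-05-08p}. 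The dual direction is handled in the paper in the same spirit, via inclusions only (see \eqref{e:jeddah2013-05-19b}). Once you downgrade your equality to an inclusion, your plan coincides with the paper's proof.
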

\begin{proof}
Set
\begin{equation}
\label{e:2013-05-09a}
A=\partial f,\;C=\nabla\ell,\quad\text{and}\quad
(\forall k\in\{1,\ldots,r\})\quad
\begin{cases}
B_k=\partial g_k\\
D_k=\partial h_k. 
\end{cases}
\end{equation}
We derive from \cite[Proposition~17.10]{Livre1} that $C$ is 
monotone. Furthermore, 
\cite[Theorem~20.40 and Corollary~16.24]{Livre1}
assert that the operators $A$, $(B_k)_{1\leq k\leq r}$, and 
$(D_k)_{1\leq k\leq r}$ are maximally monotone with inverses
respectively given by $\partial f^*$, 
$(\partial g^*_k)_{1\leq k\leq r}$, and 
$(\partial h^*_k)_{1\leq k\leq r}$. Moreover, \eqref{e:R-stay} 
implies that \eqref{e:2013-04-30p} has a solution. Now let $x$ and 
$\boldsymbol{v}=(v_k)_{1\leq k\leq r}$ be,
respectively, the solutions to \eqref{e:2013-04-30p} and 
\eqref{e:2013-04-30d} produced by Theorem~\ref{t:1}.
Since the uniform convexity of a function at a point
implies the uniform monotonicity of its subdifferential at
that point \cite[Section~3.4]{Zali02} and since, in the setting of
\eqref{e:2013-05-09a}, \eqref{e:2013-05-08x} reduces to 
\eqref{e:2013-05-06x} thanks to \eqref{e:prox2}, 
it is enough to show that $x$ solves
\eqref{e:2013-05-08p} and $\boldsymbol{v}$ solves 
\eqref{e:2013-05-08d}. To this end, we first derive from 
\eqref{e:2013-05-18q} and 
\cite[Propositions~16.5(ii) and 24.27]{Livre1} that
\begin{align}
\label{e:2013-05-18b}
(\forall k\in\{1,\ldots,r\})\quad
\big(L_k^*\circ(\partial g_k)\circ L_k\big)\infconv
\big(M_k^*\circ(\partial h_k)\circ M_k\big)
&\subset\partial(g_k\circ L_k)\infconv\partial
(h_k\circ M_k)\nonumber\\
&=\partial\big((g_k\circ L_k)\infconv (h_k\circ M_k)\big).
\end{align}
Hence, it follows from \eqref{e:2013-05-09a} and Fermat's rule 
\cite[Theorem~16.2]{Livre1} that
\begin{align}
\label{e:2013-05-09b}
x~\text{solves \eqref{e:2013-04-30p}}
&\Rightarrow
z\in\partial f(x)+\sum_{k=1}^r\big((L_k^*\circ(\partial g_k)
\circ L_k)\infconv (M_k^*\circ(\partial h_k)\circ M_k)\big)x
+\nabla\ell(x)
\nonumber\\
&\Rightarrow
z\in\partial f(x)+\sum_{k=1}^r\partial\Big((g_k\circ L_k)
\infconv(h_k\circ M_k)\Big)x+\partial\ell(x)
\nonumber\\
&\Rightarrow
0\in\partial\Big(f+\sum_{k=1}^r\big((g_k\circ L_k)
\infconv(h_k\circ M_k)\big)+\ell-\scal{\cdot}{z}\Big)(x)
\nonumber\\
&\Rightarrow x~\text{solves \eqref{e:2013-05-08p}.}
\end{align}
On the other hand, \eqref{e:2013-05-17q} and 
Corollary~\ref{c:1}\ref{c:1ii} yield
\begin{equation}
\label{e:jeddah2013-05-19a}
(\forall k\in\{1,\ldots,r\})\quad
M_k^*\pushfwd\partial h_k^*=\partial(M_k^*\pushfwd h_k^*),
\end{equation}
while \cite[Proposition~16.5(ii)]{Livre1} yields
\begin{equation}
\label{e:jeddah2013-05-19b}
(\forall k\in\{1,\ldots,r\})\quad
\partial g_k^*+L_k\circ\big(\partial(M_k^*\pushfwd h_k^*)\big)
\circ L_k^*\subset\partial\big(g_k^*+(M_k^*\pushfwd h_k^*)
\circ L_k^*\big).
\end{equation}
Now define $\GGG$ as in \eqref{e:2013-05-16e} and
\begin{equation}
\label{e:jeddah2013-05-19c}
\begin{cases}
\varphi\colon\HH\to\RX\colon u\mapsto(f^*\infconv\ell^*)(z+u)\\
\boldsymbol{\psi}\colon\GGG\to\RX\colon
\boldsymbol{v}\mapsto\sum_{k=1}^r
\big(g_k^*(v_k)+(M_k^*\pushfwd h_k^*)(L_k^*v_k)\big)\\
\boldsymbol{M}\colon\GGG\to\HH\colon\boldsymbol{v}
\mapsto-\sum_{k=1}^rL_k^*v_k.
\end{cases}
\end{equation}
Then
\begin{equation}
\label{e:jeddah2013-05-19d}
(\forall\boldsymbol{v}\in\GGG)\quad
\varphi(\boldsymbol{M}\boldsymbol{v})
+\boldsymbol{\psi}(\boldsymbol{v})=(f^*\infconv\ell^*)
\bigg(z-\sum_{k=1}^rL_{k}^*v_k\bigg)+\sum_{k=1}^r
\big(g^*_k(v_k)+(M^*_k\pushfwd h^*_k)(L^*_kv_k)\big).
\end{equation}
Invoking successively \eqref{e:2013-05-09a}, 
\eqref{e:jeddah2013-05-19a}, \eqref{e:jeddah2013-05-19b}, 
\cite[Proposition~16.8]{Livre1},
\eqref{e:jeddah2013-05-19c}, \eqref{e:jeddah2013-05-19d}, 
and Fermat's rule, we get
\begin{align}
\label{e:2013-05-09c}
\boldsymbol{v}~\text{solves \eqref{e:2013-04-30d}}
&\Rightarrow
(\forall k\in\{1,\ldots,r\})\;\;
0\in-L_k\bigg(\partial(f+\ell)^*\bigg(z-\sum_{l=1}^rL_{l}^*
{v_l}\bigg)\bigg)\nonumber\\
&\quad\;+\partial g_k^*(v_k)+L_k\Big(\big(M_k^*\pushfwd
\partial h_k^*\big)(L_k^*v_k)\Big)
\nonumber\\
&\Rightarrow
(\forall k\in\{1,\ldots,r\})\;\;
0\in-L_k\bigg(\partial(f^*\infconv\ell^*)
\bigg(z-\sum_{l=1}^rL_{l}^*
v_l\bigg)\bigg)\nonumber\\
&\quad\;+\partial\big(g_k^*+(M_k^*\pushfwd h_k^*)\circ 
L_k^*\big)(v_k)
\nonumber\\
&\Rightarrow
\boldsymbol{0}\in\big(\boldsymbol{M}^*\circ(\partial\varphi)
\circ\boldsymbol{M}\big)(\boldsymbol{v})+
\partial\boldsymbol{\psi}(\boldsymbol{v})
\nonumber\\
&\Rightarrow
\boldsymbol{0}\in\partial\big(\varphi\circ\boldsymbol{M}
+\boldsymbol{\psi}\big)(\boldsymbol{v})
\nonumber\\
&\Rightarrow
\boldsymbol{v}~
\text{solves \eqref{e:2013-05-08d},}
\end{align}
which completes the proof.
\end{proof}

Theorem~\ref{t:2} enables us to solve a new class of structured
minimization problems featuring both infimal convolutions 
and postcompositions. The special cases of this model which
arise in the area of image recovery \cite{Cham97,Setz11} 
initially motivated our investigation. Such applications are 
considered in the next section.

\section{Image restoration application}
\label{sec:5}

\subsection{Image restoration}

Proximal splitting methods were introduced in the field of image 
recovery in \cite{Smms05} for variational models of the form
\begin{equation}
\label{e:prob-1}
\minimize{x\in\HH}{f(x)+\ell(x)},
\end{equation}
where $f$ and $\ell$ are as in Problem~\ref{prob:2} (see 
\cite{Banf11} for recent developments in this application area).
In this section we show a full fledged implementation of 
the algorithm in Theorem~\ref{t:2} in the Euclidean setting 
$\HH=\RR^N$ which goes much beyond \eqref{e:prob-1}. 
For this purpose, we consider the problem of 
image restoration from a blurred image \cite{Andrews77}.
Imaging devices, such as cameras, microscopes, and telescopes,
distort the light field due to both optical imperfections and
diffraction; another source of blur is relative movement of the
scene and the device during the exposure, as happens when taking a
photo in low-light without a tripod or when a telescope observes
the stars with imperfect motion compensation. The effect is that
the recorded image is the convolution of the true scene with a
function known as the point-spread function. The resulting
convolution operator $T$ is called the blur operator.
 
The original $N$-pixel ($N=512^2$) image shown in 
Fig.~\ref{fig:1}\subref{subfig1} is degraded by a linear blurring 
operator $T$ associated with a 21-pixel long point-spread function 
corresponding to motion blur, followed by addition of a noise
component $w$. Images in their natural matrix form are converted to
vectors $x\in \RR^N$ by stacking columns together.
We write the coefficients of $x$ as 
$x=(\xi_i)_{1\leq i\leq N}$, but when we wish
to make use of the 2-dimensional nature of the image 
(as a $\sqrt{N} \times \sqrt{N}$ image), we use the 
convention $\xi_{i,j}=\xi_{(j-1)\sqrt{N}+i}$ for every
$i$ and $j$ in $\{1,\ldots,\sqrt{N}\}$, so that  
$i$ and $j$ refer to the row and column indices, respectively.
The degraded image 
\begin{equation}
\label{e:model}
\obs=T\overline{x}+w 
\end{equation}
is shown in Fig.~\ref{fig:1}\subref{subfig2}. 
The noise level is chosen to give $\obs$ a 
signal-to-noise ratio of 45~dB relative to 
$T\overline{x}$.
The variational formulation we propose to recover 
$\overline{x}$ is an instantiation of Problem~\ref{prob:2}
with $r=2$, namely,
\begin{equation}
\minimize{x\in C}{
\big(
{(\alpha\|\cdot\|_{1,2}\circ \TV)}
\infconv
{(\beta\|\cdot\|_{1,2}\circ \higherTV )}
\big)(x)+{\gamma\|Wx\|_1}+
{\frac{1}{2}\|Tx-\obs\|_2^2}}
\end{equation}
or, equivalently,
\begin{multline}
\label{e:prob2}
\minimize{x\in\HH}{
\underbrace{\iota_C}_{f}(x)+\big(
\underbrace{(\alpha\|\cdot\|_{1,2}\circ \TV)}_{g_1 \circ L_1}
\infconv\underbrace{(\beta\|\cdot\|_{1,2}\circ
\higherTV )}_{h_1 \circ M_1}\big)(x)\\+
\big(\underbrace{\gamma\|W\cdot\|_1}_{g_2 \circ L_2}
\infconv
\underbrace{(\iota_{\{0\}}\circ\Id)}_{h_2 \circ M_2}\big)(x)+
\underbrace{\frac{1}{2}\|T\cdot-\obs\|_2^2}_{\ell}(x) }.
\end{multline}
In this model, $\alpha$, $\beta$, and $\gamma$ are strictly
positive constants, and $C$ is a constraint set modeling the 
known amplitude bounds on pixel values; here $C=[0,1]^{N}$. 
To promote the piecewise smoothness of $\overline{x}$ we use an
inf-convolution term mixing first- and second-order total variation
potentials, in a fashion initially advocated in \cite{Cham97}
and further explored in \cite{Setz11}. 
First-order total variation is commonly
used in image processing, but suffers from staircase effects (see,
e.g., \cite{Cham97}), which are reduced by using the
inf-convolution model. The operators $\TV$ and $\higherTV$
are, respectively, first and second order discrete gradient 
operators that map $\RR^N$ to $\RR^{N\times M}$ 
for $M=2$ and $M=3$, respectively
(see section~\ref{sec:TV} for details).
The functions $g_1$ and $h_1$ are the usual mixed
norms defined on $\RR^{N \times M}$ as 
\begin{equation}
\label{e:l1l2}
\|\cdot\|_{1,2}\colon x \mapsto \sum_{i=1}^N \sqrt{ \sum_{j=1}^M
\xi^2_{i,j} }.
\end{equation}
which is the sum of the norm of the rows of $x$.
The potential
\begin{equation}
x\mapsto\|Wx\|_1,
\end{equation}
where  $W$ is the analysis operator of a weighted 9/7 
biorthogonal wavelet frame~\cite{Cohe1992}, promotes sparsity of
wavelet coefficients of $x$. Since natural images are known to have
approximately sparse wavelet representations, this term penalizes
noise, which does not have a sparse wavelet representation.
Such wavelet terms are standard in the literature, and are often
used in conjunction with a first-order TV term~\cite{Pust11}.
Finally, data fidelity is promoted by the potential 
\begin{equation}
\ell\colon x\mapsto\frac12\|Tx-\obs\|^2. 
\end{equation}

\begin{remark}\label{rmk:1}
\rm
Here are some comments on the implementation of the algorithm
from Theorem~\ref{t:2} in the setting of \eqref{e:prob2}.
\begin{enumerate}
\item
The proximity operator of $f=\iota_C$ is simply the projector
onto a hypercube, which is straightforward.
\item
By \cite[Example~14.5]{Livre1}, for every 
$x\in\HH\smallsetminus\{0\}$, 
\begin{equation} 
\label{e:proxl1l2}
\prox_{\|\cdot\|} x=\left(1-\frac{1}{\|x\|}\right) x
\end{equation}
and $\prox_{\|\cdot\|} 0=0$.
Since $\|x\|_{1,2}$ is separable in the rows of $x$, $\prox_{
\|\cdot\|_{1,2} } x$ is computed by applying \eqref{e:proxl1l2} to
each row. 
\item
The gradient of $\ell$ is 
$\nabla\ell\colon x\mapsto T^\top(Tx-\obs)$, 
which is Lipschitz continuous with constant $\|T\|^2$.
\item
The proximity operator of $\|\cdot\|_1$ is implemented
by soft-thresholding of each component \cite{Banf11}.
\item
No special assumption is required on the structure of $W$
(e.g., the frame need not be tight or, in particular, an 
orthonormal basis). Without assumptions on $W$, there is no known
closed-form proximity operator of $x \mapsto \gamma \|Wx\|_1$, 
which is why it is important to treat $\|\cdot\|_1$ and 
$W$ separately.
\item
We have used only one hard constraint set $C$, but it is clear that
our framework can accommodate an arbitrary number of constraint
sets, hence permitting one to inject easily {\em a priori}
information in the restoration process. Each additional hard
constraint of the type $L_k\overline{x}\in C_k$ can be handled by
setting $g_k=\iota_{C_k}$, $h_k=\iota_{\{0\}}$, and $M_k=\Id$.
\end{enumerate}
\end{remark}

\begin{remark}
Remark~\ref{rmk:1} shows that the computation of proximity
operators for each function involved in~\eqref{e:prob2} is
implementable.  It is also possible to compute proximity operators
for scaled versions of the above functions. Let $\rho\in\RPP$. 
Then given $\varphi\in\Gamma_0(\HH)$ and 
$\widetilde{\varphi}\colon x\mapsto\varphi(\rho x)$, 
\cite[Corollary~23.24]{Livre1} implies that
\begin{equation}
(\forall x\in\HH)\quad\prox_{\widetilde{\varphi}}\,
x=\rho^{-1}\prox_{\rho^2 \varphi} (\rho x).
\end{equation}
This gives the possibility of writing $f(Lx)$ as
$\tilde{f}(\tilde{L}x)$ for $\tilde{L}=\rho^{-1}L$. Our
implementation will exploit this flexibility in order to rescale
all $L_k$ and $M_k$ operators to have unit operator norm.
Numerical evidence suggests that this improves convergence profiles
since all dual variables $(v_k)_{1\leq k\leq r}$ and 
$(w_k)_{1\leq k\leq r}$ are approximately of the same scale.
\end{remark}

\subsection{Total variation}
\label{sec:TV}
Total variation can be defined for mathematical objects such as
measures and functions \cite{Ziem89}. In a discrete setting, 
there are many possible definitions of total variation.
We use the standard isotropic discretization,
\begin{equation}
\label{e:TV}
\tv(x)= \sum_{i=1}^{\sqrt{N}-1}\sum_{j=1}^{\sqrt{N}-1} \sqrt{ 
\left( \xi_{i+1,j} - \xi_{i,j} \right)^2 + 
\left( \xi_{i,j+1} - \xi_{i,j} \right)^2 }, \quad
x=(\xi_k)_{1\leq k \leq N}, \,\xi_{i,j}=\xi_{(j-1)\sqrt{N}+i},
\end{equation}
originally advocated in \cite{Rudi92}. There is no known closed
form expression for the proximity operator of \eqref{e:TV}. 

Infimal-convolution with a second-order total variation term was
first suggested in~\cite{Cham97}. We use the 
particular second-order
total variation term corresponding to ``$\mathcal{D}_{2,b}$'' (with
weights $b=(1,\frac{1}{2},1)$) from~\cite{Setz11}.  
We now show how to recover the relation $\tv(x)=\|\TV x\|_{1,2}$.
Define the following horizontal finite-difference operator 
\begin{equation} 
\label{e:tvdiff1}
\Dh\colon\RR^{N} \rightarrow \RR^{\sqrt{N} \times \sqrt{N}} 
\colon x \mapsto z = (\zeta_{i,j})_{1\leq i,j \leq \sqrt{N}}, 
\;\zeta_{i,j}=
\begin{cases} 
\xi_{i,j+1} - \xi_{i,j} & 1 \leq j < \sqrt{N} \\
0 & j=\sqrt{N}, 
\end{cases}
\end{equation}
and define the  vertical operator $\Dv$ by
$\Dv\colon \; x \mapsto (\Dh(x^\top))^\top$.
Let $\VEC(\cdot)$ be the
mapping that re-orders a matrix by stacking the columns together,
and define $\TV\colon x\mapsto( \VEC(\Dh(x)), \VEC(\Dv(x)) )$.
Then by comparing \eqref{e:l1l2} with \eqref{e:TV}, we 
observe that $\tv(x)=\|\TV x\|_{1,2}$.

The second-order total variation potential makes use of an
additional set of first-order difference operators that have
different boundary conditions, namely
\begin{equation}
\label{e:tvdiff2}
\Dhh: \RR^{N} \rightarrow \RR^{\sqrt{N} \times \sqrt{N}} 
\colon x \mapsto z = (\zeta_{i,j})_{1\leq i,j \leq \sqrt{N}},
\;\zeta_{i,j}=
\begin{cases}
\xi_{i,j} - \xi_{i,j-1} & 1 <j < \sqrt{N} \\
\xi_{i,j} & j=1 \\
-\xi_{i,j-1} & j=\sqrt{N} \end{cases},
\end{equation}
and $\Dvv\colon x \mapsto (\Dhh(x^\top))^\top$.
Then define 
\begin{equation}
\higherTV\colon x\mapsto
\left(\VEC(\Dhh(\Dh x)), \frac{\VEC( \Dhh(\Dv x)) +
\VEC(\Dvv(\Dh x))}{\sqrt{2}}, \VEC( \Dvv(\Dv x) ) \right).
\end{equation}
The second-order total variation potential is
defined as $x\mapsto\|\higherTV x\|_{1,2}$.

\subsection{Constraint qualifications}
To apply the results of Theorem~\ref{t:2}, we need to check that 
the constraint qualifications \eqref{e:2013-05-12q},
\eqref{e:2013-05-17q}, and \eqref{e:R-stay} hold.
Starting with \eqref{e:2013-05-12q}, for each $k\in\{1,2\}$ 
we have
\begin{align} 
\label{e:42}
\sri\big(\dom (g_k \circ L_k)^*-M_k^*(\dom h_k^*)\big) 
&=\sri\big(\dom ( L_k^*\pushfwd g_k^*)-M_k^*(\dom h_k^*) \big) 
\nonumber \\
&=\sri\big( L_k^*(\dom g_k^*)-M_k^*(\dom h_k^*)\big) 
\nonumber \\
&=L_k^*\big(\reli \dom g_k^*)-M_k^*(\reli \dom h_k^*\big),
\end{align}
where the first line follows from \cite[Proposition~15.28]{Livre1}
and the fact that $g_k$ has full domain, the
second line follows from \cite[Proposition~12.34(i)]{Livre1}, and
the third line follows from \cite[Corollary~6.15]{Livre1}. Since
$g_1$, $g_2$, and $h_1$ are coercive, their 
conjugates all include $0$ in the interior of their domain
\cite[Theorem~14.17]{Livre1}. Furthermore, the conjugate of
$h_2=\iota_{\{0\}}$ is $h_2^* = 0$ which has full domain. Thus,
\begin{equation}
(\forall k\in\{1,2\})\quad 0 \in L_k^*(\reli \dom g_k^*)
\quad\text{and}\quad 0\in M_k^*(\reli \dom h_k^*).
\end{equation}
Altogether, \eqref{e:42} is satisfied for each $k\in\{1,2\}$ and 
hence so is \eqref{e:2013-05-12q}.
The qualification \eqref{e:2013-05-17q} holds for $k=1$ since
$h_1=\|\cdot\|_{1,2}$ has full domain. For $k=2$, since
$h_2=\iota_{\{0\}}$, using \cite[Corollary~6.15]{Livre1} 
and the linearity of $M_2$, we obtain
\begin{equation}
\sri(\ran M_2-\dom h_2)=\sri( \ran M_2 )=\reli( \ran M_2 ) 
=\ran M_2.
\end{equation}
Thus, since $0\in\ran M_2$, \eqref{e:2013-05-17q} is satisfied.
On the other hand, 
since $\HH$ is finite-dimensional, the constraint qualifications
\eqref{e:R-stay} is implied by
Proposition~\ref{p:2013-05-17b}\ref{p:2013-05-17bv}.
Both $g_1$ and $h_1$ are norms and therefore have full domain, so 
\eqref{e:cq2} is satisfied for $k=1$. For $k=2$, 
$g_2$ is a norm and has full domain while 
$h_2=\iota_{\{0\}}$ so $0 \in \reli \dom h_2$ and hence
\eqref{e:cq2} holds for $k=2$.

To apply Proposition~\ref{p:2013-05-17b}, the primal problem must
have a solution. Here existence of a solution follows from the
compactness of $C$ \cite[Section~11]{Livre1}. 

\begin{figure}[t]
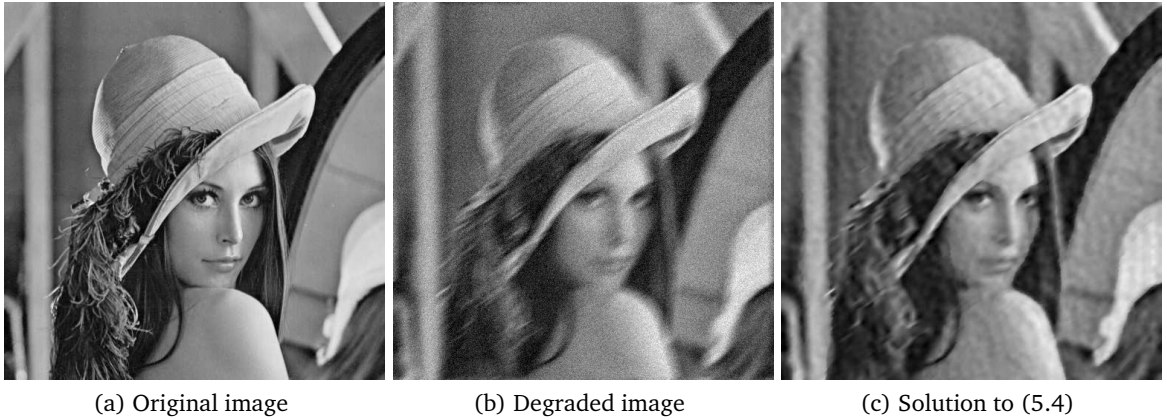

\centering
\subfloat[Original image]{
\includegraphics[width=\mySubFigSize]%
{Image_orig}\label{subfig1}}
\subfloat[Degraded image]{
\includegraphics[width=\mySubFigSize]%
{Image_degraded}\label{subfig2}}
\subfloat[Solution to \eqref{e:prob2}]{
\includegraphics[width=\mySubFigSize]%
{Image_restored}\label{subfig3}}
\caption{Original, blurred, and restored images.
\label{fig:1}}
\end{figure}

\subsection{Numerical experiments}

Experiments are made on a quad-core 1.60 GHz Intel i7 laptop, with
the algorithms and analysis implemented using the free software
package GNU Octave \cite{Octave}. The authors are grateful 
for the support of the Octave development community.

Note that in \eqref{e:2013-05-08x}, the update for $s_{1,1,n}$ and
for $p_{1,k+1,n}$ both involve $L_{k}^*v_{k,n}$,
hence it is possible to prevent redundant computation by storing
$L_{k}^*v_{k,n}$ as a temporary variable.  Similarly, the updates
for $q_{1,1,n}$ and $q_{1,k+1,n}$ both involve $L_{k}^*p_{2,k,n}$,
which can also be stored as a temporary variable for savings. With
this approach, each $L_{k}$ and $M_{k}$ is applied exactly twice
per iteration, and each  $L_{k}^*$ and $M_{k}^*$ is also applied
exactly twice.
The restored image is displayed in
Fig.~\ref{fig:1}\subref{subfig3}.  The algorithm uses all variables
initialized to $0$. The values of the parameters are as follows:
$\alpha=\beta=\gamma=10^{-2}$. 
Figures of merit relative to these experiments are provided in
Table~\ref{tab:1}. Given a reference image $x$ and an estimate
$\overline{x}=(\overline{\xi}_i)_{1\leq i\leq N}$, the 
peak signal-to-noise ratio
(PSNR), a standard measure of image quality, is defined by
\begin{equation}
\text{PSNR}_{x}(\overline{x})=10\log_{10}\left( \frac{
N\max_{1\leq i\leq N}\xi_{i}^2}{
\sum_{i=1}^N(\xi_{i}-\overline{\xi}_{i})^2}\right)
\end{equation}
and reported in units of decibels (dB). The structural similarity
index attempts to quantify human visual response
to images; details can be found in \cite{SSIM}.

\begin{table}
\centering
\caption{Quantitative measurements of performance}
\begin{tabular}{lll}
\toprule
Method & Peak signal-to-noise ratio & Structural 
similarity index \\
\midrule
Blurred and noisy image & 20.32 dB  & 0.545 \\
Restoration          & 25.42 dB & 0.803 \\
\bottomrule
\end{tabular}
\label{tab:1}
\end{table}

{\bfseries Acknowledgment.}
The authors thank J.-C.~Pesquet for the implementation of the
wavelet transform. S. R. Becker is funded by the Fondation Sciences
Math\'ematiques de Paris.  The research of P. L. Combettes is
supported in part by the European Union under the 7th Framework
Programme ``FP7-PEOPLE-2010-ITN'', grant agreement number
264735-SADCO.

\end{document}